
\documentclass[12pt,a4paper]{article}
\usepackage{eurosym}
\usepackage{amssymb}
\usepackage{amsmath,amsfonts,amsthm}
\usepackage{amstext,latexsym,bm}
\usepackage{graphicx, color}
\usepackage{amsmath}
\usepackage[english]{babel}
\usepackage{amsfonts}
\usepackage{amsthm,amscd}
\usepackage{amsmath}
\usepackage{amssymb}
\usepackage{amstext}

\setcounter{MaxMatrixCols}{10}

\newtheorem{theorem}{Theorem}
\newtheorem{corollary}[theorem]{Corollary}
\newtheorem{definition}[theorem]{Definition}
\newtheorem{example}[theorem]{Example}
\newtheorem{lemma}[theorem]{Lemma}

\newtheorem{proposition}[theorem]{Proposition}
\newtheorem{remark}[theorem]{Remark}

\begin{document}

\title{Grand-canonical Thermodynamic Formalism via IFS: volume, temperature,
gas pressure and grand-canonical topological pressure}
\author{ A. O. Lopes; E. R. Oliveira; W. de S. Pedra; V. Vargas}
\maketitle

\begin{abstract}
We consider here a dynamic model for a gas in which a variable number of
particles $N \in \mathbb{N}_0 := \mathbb{N} \cup \{0\}$ can be located at a
site. The dynamics are played by the shift acting on
$\Omega := \mathcal{A}^\mathbb{N}$, where 
$\mathcal{A} := \{1,2,...,r\}$. Introducing  the chemical potential $\mu$, we adapt the
concept of grand-canonical partition sum of thermodynamics of gases, considering a certain family of potentials $%
(A_N)_{N \in \mathbb{N}_0}$, $A_N:\Omega \to \mathbb{R}$. Extending classical thermodynamic formalism, we introduce the grand-canonical-Ruelle operator: $\mathcal{L}_{\beta, \mu}(f)=g$, when, $\beta>0,\mu<0,$ where

\medskip $\,\,\,\,\,\,\,\,\,\,\,\,\,\,g(x)= \mathcal{L}_{\beta, \mu}(f) (x)
=\sum_{N \in \mathbb{N}_0} e^{\beta \, \mu\, N }\, \sum_{j \in \mathcal{A}}
e^{- \,\beta\, A_N(jx)} f(jx). $ \medskip

We show the existence of the main eigenvalue, an associated eigenfunction,
and an eigenprobability for $\mathcal{L}_{\beta, \mu}^*$. We also consider the
concept of entropy for holonomic  probabilities on
$\Omega\times \mathcal{A}^{\mathbb{N}_0}$,  relating these items with the  problem of maximizing grand-canonical pressure. We briefly digress on a possible interpretation of the concept 
of topological pressure as related to the gas pressure of gas thermodynamics.	
\end{abstract}

\vspace{2mm}

Keywords: Particles of a gas, symbolic spaces, grand-canonical partition,
IFS Thermodynamic Formalism, Ruelle operator, holonomic probabilities,
entropy, grand-canonical entropy, grand-canonical pressure.

\vspace{2mm}

Mathematics Subject Classification (2020): 37D35; 80-10; 82B05; 82B30

\section{Introduction}

The study of the thermodynamics of gases with a non-specified number of
particles is a classical topic in Mathematical Physics (see Section 1.6 in
\cite{MR2777415} or Section 11.2.4 in \cite{MR3309581}). Here we will
investigate this type of problem from a dynamic perspective. That is the
search for statistical properties that can be obtained with the help of a
generalization of the Ruelle operator (which corresponds to the transfer operator of Statistical Mechanics)  of Thermodynamic Formalism (in the
sense of \cite{MR1085356} or \cite{Fan}). Concepts like volume, temperature, entropy, and
gas pressure arise naturally in thermodynamics when we introduce the number
of particles $N$ as a variable (see \cite{Cale} or Section 5.6 in \cite{Bena}).
The introduction of a negative constant $\mu $, called the chemical
potential, plays an important role in the convergence of the grand-canonical
partition sum (see (1.34) in \cite{MR2777415}). We analyze such kinds of
problems from a mathematical perspective and we leave the question of
physical relevance for a posterior investigation. The discussion in Section
3.2.4 in \cite{MR3309581} on the topic of probabilities for particle
distributions is quite enlightening. We would like to emphasize that the
postulates of equilibrium thermodynamics of gases are an issue subject to
controversy (see Section \ref{Tgas}).

In classical Thermodynamic Formalism, in general, results avoid taking into
account  this variable number of particles.

We are interested in the mathematical formulation of physical problems in
equilibrium from a dynamic perspective. Time does not occur as a variable in
thermodynamic equations. When we allude \textit{a mathematical formulation
in a dynamical setting}, by this, we mean problems related to the action of
the shift $\sigma$ on the symbolic space $\{1,2,...,r\}^\mathbb{N}$; this is associated with translation on the
one-dimensional lattice and is not related to time.

We are interested in the statistics of the number of particles: any number $%
N \in \mathbb{N}_0:= \mathbb{N} \cup \{0\}$ particles can be in one site. Therefore, in principle,
it is natural to consider an IFS with a countable number of functions (and
with weights), but it is possible (for part of the results we consider) to translate it, after some work,  to the case of a finite
one (and then results from \cite{LO} can be used). 

We will consider a family of potentials $A_N: \{1,2,...,r\}^\mathbb{N} \to \mathbb{R}$, $N \in \mathbb{N}_0$, and we are interested in
equilibrium states. In the IFS setting instead of shift-invariant
probabilities it is natural to consider holonomic probabilities, as
described in \cite{LO} (see Definition \ref{lolo}). All this will be
carefully described in Section \ref{Boss}.

We introduce what we call the grand-canonical-Ruelle operator  (see \eqref{polu}) and we show a version of the Ruelle Theorem (about
eigenfunctions and eigenvalues), which is presented as our main result in
Theorem \ref{thm: equili_IFS_fin}.   A value $\beta>0$ plays an importan role in the theory, and $\beta=\frac{1}{T}$, where $T>0$ is temperature.
The main eigenvalue will be called the
grand-canonical eigenvalue.
We will assume some mild conditions  for potentials $A_N$, $N \in \mathbb{N}_0$, in order to control the behavior of the grand-canonical operator.  The family of potentials $A_N$ can growth, for instance, like $N$ (but not necessarily like that).

We present at the end of the paper an example which is simple but illustrates the connection with a classical model in Statistical Mechanics, where the number of particles  $N$  ranges in the set $\mathbb{N}_0$  (see details in  Example \ref{Ham-cte}). Consider the value $\mu < 0$, which is called the chemical potential. This plays an essential  role in questions involving the 
grand-canonical formalism. Assume that $E > 0$ represents the energy of a particle, and then $N\, E$ is  the  total  energy of  $N$ particles. Consistent with the development of the theory to be developed here, given $\beta>0$, we should take  the Hamiltonian $A_N = N\,E$, $N\in \mathbb{N}_0$, and consider the expression

$$\sum_{N \in \mathbb{N}_0}e^{\beta\, N\,\mu} e^{- \beta N\,
	E}= \frac{1}{ 1-
	e^{\beta (\mu- E)}} > 0 ,$$
which is called the grand-canonical partition sum.
The value $\mu<0$ helps in the convergence of the series. The above example does not contemplate dynamics and is quite particular because each $A_N$ is a constant function.
The results we get here describe an interplay between ergodic theory and the thermodynamic of gases.

\medskip
The thermodynamic formalism, while related to the principle of maximizing topological pressure, shares a basic principle with gas dynamics: the second law of thermodynamics; nature maximizes entropy at equilibrium. In this sense, the respective concepts of entropy must correspond. From the perspective of analyzing grand canonical questions from a dynamical viewpoint (which involves dealing with an indefinite number of particles), there is a need to introduce a natural concept of entropy; this is one of our objectives. The grand-canonical-Ruelle operator plays an important role here.
\medskip

We recall that for a (uniformly) continuous function $f: \Omega \rightarrow \mathbb{R}$ the modulus of continuity of $f$ is
\begin{equation} \label{Didu0}
\omega_f(t)=\sup _{d(x, y) \leq t} f(x)- f(y).
\end{equation}
The function $f$ is called Dini-continuous if
$$
\int_0^1 \frac{\omega_f(t)}{t} dt< \infty.
$$
An equivalent condition (see \cite{Ste01}) is the following: for some $c \in(0,1)$,
\begin{equation} \label{Didu}
\sum_{i=1}^{\infty} \omega_f\left(c^i \right)<\infty
\end{equation}
(since $\Omega$ is compact and has diameter equal to $1$). It is easy to see that $f$ is Lipschitz (resp. $\alpha$-H\"{o}lder) if, and only if $\omega_f\left(t\right) \leq \operatorname{Lip}(f)\, t$ (resp. $\omega_f\left(t\right) \leq \operatorname{Hol}(f)\, t^{\alpha}$). Thus, both classes are contained in the class of Dini continuity.

The grand-canonical potential
$$\psi(y):= \ln\left(\sum_{N \in \mathbb{N}_0}  e^{-\beta [ A_N(y) - \mu N]}\right),$$
will play an important role in our reasoning (in Section \ref{ense} we present  appropriate conditions on the $A_N$). It will be required that $\psi$ is at least Dini continuous (see \eqref{Didu0} and \eqref{Didu} for the definition).

In Corollary \ref{ana} we will show an analytical dependence of the grand-canonical eigenvalue in $\psi$ assuming more regularity on $\psi$.

In Section \ref{ense} we recall some classical results in thermodynamic
formalism and we introduce the dynamical canonical ensemble in this context. Later, we will analyze the main properties of the grand-canonical-Ruelle operator, the concepts of entropy for holonomic probabilities (see
Definitions \ref{lolo} and \ref{entropy}), and also the grand-canonical topological pressure
(see \eqref{fip} and item a) in Theorem \ref{thm: equili_IFS_fin} and also
\eqref{ainf}). 

Given a family of potentials  $A_N:\Omega= \{1,...,r\}^{\mathbb{N}} \to \mathbb{R}$, $N \in \mathbb{N}_0 $, $\beta>0$, and $%
\mu<0,$  satisfying a Dini condition, the grand-canonical-Ruelle operator $f \to\mathcal{L}_{\beta, \mu}(f)=g$,  is given by
\begin{equation}  \label{polu}
g(x)= \mathcal{L}_{\beta, \mu}(f) (x) =\sum_{N \in \mathbb{N}_0} e^{\beta \,
\mu\, N }\, \sum_{j \in \mathcal{A}} e^{- \,\beta\, A_N(jx)} f(jx).
\end{equation}

We denote by $\Phi=(A_N)_{N \in \mathbb{N}_{0}}$ the family of potentials.


Note that the points of the form $j\,x$, $j \in
\mathcal{A}$, describe the set of solutions $y$ of $\sigma(y)=x$. Then, the
operator $\mathcal{L}_{\beta, \mu}$ is {\bf dynamically} defined; it corresponds
to the classical transfer operator of Statistical Mechanics but for a dynamical setting.


\smallskip

For the benefit of the mathematical reader, we will briefly describe some
basic properties of the thermodynamics of \textit{ideal} gases in Section
\ref{Tgas}. Reading this section is not necessary for understanding the
mathematical reasoning followed in the previous sections. The objective is
only to show the motivation that led us to analyze the problems that were
proposed.

In Remark~\ref{rel} we will investigate a possible interpretation of the
terminology topological pressure in a comparison with the concept of gas
pressure, which originated from the postulates of the theory that analyzes
gases confined under certain variable walls and at a certain temperature.

In a related work, the authors consider in \cite{LR} non-equilibrium and the
second law of thermodynamics in Thermodynamic Formalism. In \cite{LW} it is
presented a brief account of Thermodynamics, Statistical Physics, and their
relation to the Thermodynamic Formalism of Dynamical Systems.

The study of Thermodynamic Formalism for symbolic spaces with an infinite
countable alphabet (the set $\mathcal{A}=\mathbb{N}$) is the topic of \cite%
{Sarig}, \cite{BBE} and \cite{FV}; but a different class of problems is
considered there.

Results for IFS using conformal branches appear in \cite{Mih} but it is also a different setting compared to ours.
\medskip

 Conclusion: The classical study of the grand-canonical
partition sum in the thermodynamics of gases considers an indefinite number
of particles $N$, a Hamiltonian $A_{N}$, and the chemical potential $\mu $,
which is negative in order to ensure convergence of the associated
sum. 

We consider the corresponding problems on the symbolic dynamical
setting considering a Dini family of potentials $(A_N)_{N \in \mathbb{N}
_0}$, $A_N:\Omega \to \mathbb{R}$. We introduce the grand-canonical-Ruelle
operator: $\mathcal{L}_{\beta, \mu}$, $\beta>0,\mu<0$ (as defined
in \eqref{polu}), and we can get concepts like discrete-time entropy
(and also the pressure problem associated with such entropy). Our main results
will be obtained from adapting well-known
properties of the Thermodynamic Formalism for IFS with weights to our
dynamical setting. The naturally associated Gibbs probability is not 
shift invariant (it is holonomic).  One of our main results
is Theorem \ref{thm: Fan_Boson}, which shows the existence of eigenfunctions
and eigenprobabilities (a key step for analyzing questions related to
maximizing pressure). In the  variational problem of  grand-canonical topological pressure (see Theorem
\ref{thm: equili_IFS_fin})  the holonomic probabilities play an important (and natural) role due to the structure of the  IFS setting; indeed,  it is required a special (and natural) concept of entropy which is described by Definition \ref{entropy}.

We follow two different
lines of reasoning, the first one is modeling the problem via a finite IFS
with weights 
see Section \ref{secfi}). Alternatively, we consider an
infinitely countable IFS setting (see Section \ref{outy}).
Note that in Section 3.1 we get probabilities on  $\Omega \times \mathcal{A}^{\mathbb{N}_0}$ and in Section 3.2 we get probabilities on $\Omega$.

\medskip

\section{A brief review of Classical Thermodynamic Formalism}

\label{ense}

Consider a finite alphabet $\mathcal{A} := \{1, ... , r\}$ and the \textit{%
shift map} $\sigma((x_n)_{n \in \mathbb{N}}) := (x_{n+1})_{n \in \mathbb{N}}$
acting on the \textit{symbolic space} $\Omega := \mathcal{A}^\mathbb{N}$
which is equipped with the metric (which makes ${\rm diam}(\Omega)<1$)
\begin{equation*}
d(x, y) := \left\{
\begin{array}{ll}
2^{-\min\{n \in \mathbb{N} : x_n \neq y_n \}}, & x \neq y; \\
0, & x=y.%
\end{array}
\right.
\end{equation*}
The dynamical system $(\Omega, \sigma)$ is widely known in the mathematical
literature as the \textit{full-shift} on the alphabet $\mathcal{A}$. We
denote the set of \textit{continuous functions} from $\Omega$ into $\mathbb{R%
}$ by $\mathrm{C}(\Omega)$ and we use the notation $\mathrm{C}^+(\Omega)$
for the corresponding cone of \textit{positive continuous functions}. We
also denote the set of \textit{Lipschitz continuous functions} from $\Omega$
into $\mathbb{R}$ by $\mathrm{Lip}(\Omega)$ and we use the notation $\mathrm{%
Lip}(f)$ for the \textit{Lipschitz constant} of $f \in \mathrm{Lip}(\Omega)$%
. Besides that, we denote the set of \textit{Borel probability measures} on $%
\Omega$ by $\mathcal{M}_1(\Omega)$ and we use the notation $\mathcal{M}%
_\sigma(\Omega)$ for the set of Borel \textit{$\sigma$-invariant probability
measures} on $\Omega$.

Here, we consider a system describing the dynamical behavior of a classical
gas one-dimensional lattice composed of $N$ particles at temperature $T$,
which are contained in a region with volume $V$. It is natural to introduce a parameter $\beta$ in such a way that  satisfies
relation
\begin{equation}  \label{Boltzmann-constant}
\beta := \frac{1}{k_B \, T} \;,
\end{equation}
where $k_B \sim 1.38066 \times 10^{-23} J/K$ is the so-called \textit{%
Boltzmann's constant} (see (1.2) in \cite{MR2777415} for details). For practical purposes, people{\tiny } sometimes incorporate  the constant $k_B $ in the temperature $T$, when dealing with the expression of $\beta$.

We assume that the number of particles ranges on the set $\mathbb{N}_{0} :=
\mathbb{N} \cup \{0\}$ and we consider a potential $A : \Omega \times
\mathbb{N}_0 \to \mathbb{R}$ which is Lipschitz continuous w.r.t. the first
variable. It is not difficult to check that the potential $A$ induces a
family of potentials $\Phi = (A_N)_{N \in \mathbb{N}_0}$, where $A_N :=
A(\cdot, N)$ for each $N \in \mathbb{N}_0$. In fact, the last assumption
guarantees that $A_N \in \mathrm{Lip}(\Omega)$.


We consider first the case where $N$, the number of particles, is a fixed
natural number. This corresponds to just considering a classical Ruelle operator (as in \cite{MR1085356}). Given
$N \in \mathbb{N}_0$ and some $\beta > 0$ satisfying the expression in %
\eqref{Boltzmann-constant}, we consider the \textit{Ruelle operator} $%
\mathcal{L}_{N, \beta}$ associated  to a   Dini potential $A_N$, as the one given by the equation
\begin{equation}  \label{kry1}
\mathcal{L}_{N, \beta}(f)(x) := \sum_{\sigma(y)=x} e^{-\,\beta\, A_N(y)}
f(y) = \sum_{j \in \mathcal{A}} e^{-\,\beta\, A_N(jx)} f(jx), \forall x \in
\Omega.
\end{equation}

It is well known (when  $A_N$  satisfy a Dini condtion)  that for each pair $N, \beta$, there are a main \textit{%
eigenvalue} $\lambda_{N, \beta} > 0$ and an \textit{eigenfunction} $f_{N,
\beta} \in \mathrm{Lip}(\Omega)$ for the operator $\mathcal{L}_{N, \beta}$   (see for instance  \cite{Fan} or \cite{Fan0}).

Given a continuous potential $A_N:\Omega \to \mathbb{R}$ and $\beta>0$, we  can define the dual operator $\mathcal{L}_{N, \beta}^*$ acting on the space of the Borel finite measures (see \cite{MR1085356}).

We denote by $\nu_{N, \beta} \in \mathcal{M}_1(\Omega)$ the \textit{%
eigenprobability} of the operator $\mathcal{L}_{N, \beta}^*$ associated to $%
\lambda_{N, \beta}$ and by $\rho_{N, \beta} \in \mathcal{M}_\sigma(\Omega)$
the \textit{equilibrium state} for the potential $- \beta A_N$ which, up to
a normalization, is of the form $\rho_{N, \beta} = f_{N, \beta}\;
\nu_{N, \beta}$ (see \cite{MR1085356} or \cite{Lop1} for details).

Given $x \in \Omega$ and $N \in \mathbb{N}_0$, we define the \textit{$N$%
-canonical partition} for the iterate $n \in \mathbb{N}$ calculated at the
point $x\in \Omega$ by
\begin{equation}  \label{kry3}
Z_N^n(\beta)(x) := \mathcal{L}_{N, \beta}^n (\mathit{1})(x) \;,
\end{equation}
where $\mathcal{L}_{N, \beta}^{n+1}(f) = \mathcal{L}_{N, \beta}(\mathcal{L}%
_{N, \beta}^n(f))$ for each $n \in \mathbb{N}$.

The pointwise limit $\lim_{n \to \infty}\frac{1}{n}%
\log(Z_N^n(\beta)(x))$, which is independent of $x$ (see next Lemma), plays  here the role of the so-called configurational
partition sum appearing at (1.17) on page 7 of \cite{MR2777415}.

The study of the properties of an individual Transfer operator $\mathcal{L}%
_{N, \beta}$,  for $N$ and $\beta$ fixed, it is not suitable for the case where the
number of particles $N$ ranges in the set of natural numbers (which is the
goal of the next section).

The next lemma is well-known in Thermodynamical Formalism and we will not
present a proof (see \cite{MR1085356}).

\begin{lemma}
\label{lemma-pressure} For fixed $N$ and $\beta>0$, the pointwise limit $\lim_{n \to \infty}\frac{1}{n}%
\log(Z_N^n(\beta)(x))$ exists and it is equal to $\log(\lambda_{N, \beta})$.
In particular, it is independent of the choice of $x \in \Omega$.
\end{lemma}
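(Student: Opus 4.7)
The plan is to compare $\mathcal{L}_{N,\beta}^n(\mathbf{1})$ to $\mathcal{L}_{N,\beta}^n(f_{N,\beta})$, where $f_{N,\beta}$ is the positive Lipschitz eigenfunction associated to the main eigenvalue $\lambda_{N,\beta}$ whose existence is recalled just before the statement. The eigenfunction equation gives
\begin{equation*}
\mathcal{L}_{N,\beta}^n(f_{N,\beta}) = \lambda_{N,\beta}^n \, f_{N,\beta},
\end{equation*}
so if one can sandwich the constant $\mathbf{1}$ between two positive multiples of $f_{N,\beta}$, the uniform estimate propagates under iteration because $\mathcal{L}_{N,\beta}$ is a positive operator.

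First I would exploit that $\Omega$ is compact and that $f_{N,\beta} \in \mathrm{Lip}(\Omega)$ is strictly positive (this is part of the classical Ruelle--Perron--Frobenius statement being quoted). Set $c_1 := \min_{\Omega} f_{N,\beta} > 0$ and $c_2 := \max_{\Omega} f_{N,\beta} < \infty$. Then
\begin{equation*}
\frac{1}{c_2} \, f_{N,\beta}(y) \;\le\; \mathbf{1}(y) \;\le\; \frac{1}{c_1} \, f_{N,\beta}(y), \qquad \forall y \in \Omega.
\end{equation*}
Applying $\mathcal{L}_{N,\beta}^n$ (which is monotone on the cone $\mathrm{C}^+(\Omega)$ by inspection of \eqref{kry1}) and using the eigenvalue relation yields, for every $x\in\Omega$ and every $n\in\mathbb{N}$,
\begin{equation*}
\frac{\lambda_{N,\beta}^n}{c_2} \, f_{N,\beta}(x) \;\le\; Z_N^n(\beta)(x) \;\le\; \frac{\lambda_{N,\beta}^n}{c_1} \, f_{N,\beta}(x).
\end{equation*}

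Taking logarithms, dividing by $n$, and letting $n \to \infty$, both the constants $\log(c_i)$ and the pointwise value $\log f_{N,\beta}(x) \in [\log c_1, \log c_2]$ contribute $O(1/n)$, so
\begin{equation*}
\lim_{n \to \infty} \frac{1}{n} \log Z_N^n(\beta)(x) = \log \lambda_{N,\beta},
\end{equation*}
and this limit does not depend on $x$, as claimed. The only step that is not completely trivial is the strict positivity and boundedness of the eigenfunction $f_{N,\beta}$, but this is exactly what the classical Ruelle theorem (cited to \cite{MR1085356,Lop1}) guarantees for Lipschitz potentials on the full shift, so in the present framework it is taken as input rather than an obstacle.
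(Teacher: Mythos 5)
Your argument is correct. The paper itself offers no proof of this lemma --- it explicitly defers to the classical literature (\cite{MR1085356}) --- and what you have written is precisely the standard sandwich argument that reference supplies: since $f_{N,\beta}$ is continuous and strictly positive on the compact space $\Omega$, the constants $c_1=\min f_{N,\beta}>0$ and $c_2=\max f_{N,\beta}<\infty$ exist, positivity of the weights $e^{-\beta A_N}$ in \eqref{kry1} makes $\mathcal{L}_{N,\beta}$ monotone on $\mathrm{C}^+(\Omega)$, and the eigenvalue relation then traps $Z_N^n(\beta)(x)$ between $c_2^{-1}\lambda_{N,\beta}^n f_{N,\beta}(x)$ and $c_1^{-1}\lambda_{N,\beta}^n f_{N,\beta}(x)$, so that $\frac{1}{n}\log Z_N^n(\beta)(x)=\log\lambda_{N,\beta}+O(1/n)$ uniformly in $x$. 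The only input is the Ruelle--Perron--Frobenius theorem for Lipschitz potentials, which is exactly what the paper recalls just before the statement, so nothing is missing; in fact your estimate yields uniform (not merely pointwise) convergence, slightly more than the lemma asserts.
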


We call \textit{$N$-Topological Pressure} for $\beta$ to the observable
satisfying
\begin{equation*}
P_N(\beta) = P(-\beta A_N) := \lim_{n \to \infty} \frac{1}{n} \log
(Z_N^n(\beta)) = \log(\lambda_{N, \beta}) \;.
\end{equation*}

In this way, by Lemma \ref{lemma-pressure} we obtain that $Z_N^n(\beta) \sim
\lambda_{N, \beta}^n$. Moreover, one can show that the following expression
holds true
\begin{equation}  \label{kry0}
P_N(\beta) = P(-\beta A_N) = \sup_{\rho \in \mathcal{M}_\sigma(\Omega)} %
\Bigl\{ h(\rho) - \beta\, \int A_N \,d \rho \Bigr\},
\end{equation}
where $h(\rho)$ is the \textit{Kolmogorov-Sinai entropy} of $\rho$ and $%
\mathcal{M}_\sigma(\Omega)$ denotes the set of $\sigma$-invariant
probabilities (for details see \cite{MR1085356}).

For a grand-canonical version of topological pressure, we will need a different version of \eqref{kry0} due to the fact that we have to consider probabilities such that the concept of 
Kolmogorov-Sinai entropy does not apply (see expression \eqref{fie}).


The above computation implies that $\displaystyle\lim_{N \to \infty}
P_N(\beta) = - \infty$ and $\displaystyle\lim_{N \to \infty} \lambda_{N,
\beta} = 0$. Furthermore, in \cite{MR1085356} (see also \cite{MR3852182}),
the authors prove that
\begin{align*}
\frac{\partial }{\partial \beta} \log \lambda_{N, \beta}|_{\beta=\beta_0} &=
\frac{\partial }{\partial \beta} P_N(\beta)|_{\beta=\beta_0} \\
&= \frac{\partial }{\partial \beta} P(-\beta A_N)|_{\beta=\beta_0} = -\int
A_N d \rho_{N, \beta_0} \;.
\end{align*}

So, by the above formula, we get
\begin{equation}  \label{derivative-Z_N}
\frac{1}{ \lambda_{N, \beta_0}}\frac{\partial \lambda_{N, \beta}}{\partial
\beta}|_{\beta=\beta_0} = -\int A_N d\rho_{N, \beta_0} \;.
\end{equation}

Above we described the classical dynamical properties of the individual
transfer operator $\mathcal{L}_{N, \beta}$. In the next section, we will use
properties of IFS Thermodynamical Formalism to address the analogous issue
for the case of a variable number of particles, where it is necessary to
consider a countable number of classical Ruelle operators (each one
indexed by the number $N$ of particles). We believe the material presented
on the present section will help the reader to understand the reasoning of
the next one.

\section{A grand-canonical Thermodynamic Formalism}

\label{Boss}

Here we consider a variable number of particles. In order to do that, we
consider  linear operators involving the new variable $N$ (describing the
number of particles), which is defined in the following way: given a family
of  potentials $\Phi = (A_N)_{N \in \mathbb{N}_0}$ (which play the role of Hamiltonians),
a chemical potential $\mu < 0$ and a value $\beta > 0$ satisfying the
expression in \eqref{Boltzmann-constant}, the \textit{grand-canonical-Ruelle
operator} $\mathcal{L}_{\beta, \mu}$ is defined as the operator assigning to
each $f \in \mathrm{C}(\Omega)$ the function
\begin{equation}  \label{BRO}
\mathcal{L}_{\beta, \mu}(f) (x) :=\sum_{N \in \mathbb{N}_0} e^{\beta \,
\mu\, N }\, \sum_{a \in \mathcal{A}} e^{- \,\beta\, A_N(ax)} f(ax) = \sum_{N
\in \mathbb{N}_0}\,e^{\beta \, \mu\, N }\,\mathcal{L}_{N,\beta}(f)(x), \;
\end{equation}
for any $x \in \Omega$.

In order to get convergence in the above sum we need some hypotheses: for fixed $\mu$
we will assume that the family of potentials $A_N$, $N \in \mathbb{N}$, is admissible, that is, we assume that for any $x\in \Omega$ the sums
$$\sum_{N \in \mathbb{N}_0} e^{ \beta\, n\, \mu} \sum_{a \in \mathcal{A}} e^{-\beta\,  A_N (  a\, x)}<\infty.$$
Then, it follows for any $f$ that
$$ ||\mathcal{L}_{\beta, \mu}(f)||_\infty \leq ||f||_\infty\,  \sum_{N \in \mathbb{N}_0} e^{ \beta\, n\, \mu} \sum_{a \in \mathcal{A}} e^{-\beta\,  A_N (  a\, x)}<\infty.$$

Once \eqref{BRO} is well defined, we can ask about the existence of eigenvalues,
eigenfunctions for $ \mathcal{L}_{\beta, \mu}$, and also holonomic probabilities for the IFS pressure. Our
main goal is to represent (after some work) the operator $\mathcal{L}_{\beta, \mu}$ as the
transfer operator of a standard IFS with weights for which the thermodynamic
formalism is already known from the literature. One can follow two different
lines of reasoning, the first one is modeling the problem via a finite IFS
with weights (see Section \ref{secfi}). We have to show that our model fits
the hypothesis of \cite{LO} and \cite{Fan}. Alternatively, one could use an
infinitely countable IFS (see Section \ref{outy}), this is fine as we will see,  but it brings technical
difficulties and some limitations as will be explained. \medskip

\subsection{Transferring the problem to the case of a finite IFS with weights.}

\label{secfi}

In this section, we introduce an IFS with weights in such a way that its
associated transfer operator coincides with the grand-canonical-Ruelle
operator $\mathcal{L}_{\beta, \mu}$. By showing that the weights satisfy the
necessary regularity conditions we will use Fan's Theorem (see \cite{Fan},
Theorem 1.1) to obtain a positive eigenfunction for $\mathcal{L}_{\beta,
\mu} $. Note that the weights are not periodic. Once we have this positive eigenfunction associated with the
spectral radius of $\mathcal{L}_{\beta, \mu}$ we can introduce the
thermodynamical formalism based on holonomic measures according to \cite{LO}
or \cite{CO17}. We recall the ideas of variational entropy and topological
pressure based on holonomic probabilities. Finally, we will show that is
possible to build a variational principle and show the existence of
holonomic equilibrium states.

\medskip

Given $r \geq 2$, consider the \textit{IFS} $\mathcal{R}:=(\Omega,
\phi_{j})_{j \in \mathcal{A}}$ where $\mathcal{A}:=\{1,...,r\}$, $%
\phi_{j}(x)=jx$ is the \textit{mnemonic representation} for the sequence $%
(j, x_1, x_2, ...)$,  where  $x=(x_1, x_2, ...) \in \Omega=\mathcal{A}^{\mathbb{N}%
}$. Of course, this IFS is contractive w.r.t. the distance introduced in $%
\Omega$. Moreover, ${\rm Lip} (\phi_{j}) = \frac{1}{2}$ for all $j\in \mathcal{A}$.

Given a family of continuous functions $q_{j}: \Omega \to \mathbb{R}, \; j
\in \mathcal{A}$ we say that $\mathcal{R}:=(\Omega, \phi_{j}, q_{j})_{j \in
\mathcal{A}}$ is an \textit{IFS with weights}. In the particular case where $%
q_{j}(x) \geq 0$ and $\sum_{j \in \mathcal{A}}q_{j}(x)=1$, for all $x
\in\Omega$, it is called an \textit{IFS with probabilities}. According to
\cite{Fan}, an IFS with weights where the maps are contractions and the
weights are non-negative is called a \textit{contractive system}.

In this setting the \textit{transfer operator} associated to $\mathcal{R} =
(\Omega, \phi_{j}, q_{j})_{j \in \mathcal{A}}$ is a map $B_{q}: \mathrm{C}%
(\Omega) \to \mathrm{C}(\Omega)$ given by:
\begin{equation}  \label{eq:op_transf_IFS_finit}
B_{q}(g)(x) := \sum_{j \in \mathcal{A}} q_{j}(x) g(\phi_{j}(x)), \forall x
\in \Omega,
\end{equation}
for any $g \in \mathrm{C}(\Omega)$.

The next lemma shows how to pick the right weights $q_{j}$ in order to
obtain the equality $B_{q} = \mathcal{L}_{\beta, \mu}$.

Let $\psi: \Omega \rightarrow \mathbb{R}$ be the grand-canonical potential
\begin{equation} \label{hert} \psi(y):= \ln\left(\sum_{N \in \mathbb{N}_0}  e^{-\beta [ A_N(y) - \mu N]}\right),
\end{equation}
where the family of potentials $A_N$ satisfies some prescribed conditions so that the formal series converges.

We recall that for a function$f: \Omega \rightarrow \mathbb{R}$ the modulus of continuity of $f$ is
\begin{equation} \label{Didu0}
\omega_f(t)=\sup _{d(x, y) \leq t} f(x)- f(y).
\end{equation}
The function $f$ is called Dini-continuous if
$$
\int_0^1 \frac{\omega_f(t)}{t} dt< \infty.
$$
An equivalent condition (see \cite{Ste01}) is the following: for some $c \in(0,1)$,
\begin{equation} \label{Didu}
\sum_{i=1}^{\infty} \omega_f\left(c^i \right)<\infty
\end{equation}
(since $\Omega$ is compact and has diameter equal to $1$). It is easy to see that $f$ is Lipschitz (resp. $\alpha$-H\"{o}lder) if, and only if $\omega_f\left(t\right) \leq \operatorname{Lip}(f)\, t$ (resp. $\omega_f\left(t\right) \leq \operatorname{Hol}(f)\, t^{\alpha}$). Thus, both classes are contained in the class of Dini continuity.

In our reasoning, it will be required to assume the  hypothesis  assuring that  \eqref{hert} is at least Dini continuous (see condition b) in the next Lemma).

\begin{lemma}
\label{lem:equal_operat_finite} Consider  the family of  potentials $\Phi=(A_N)_{N \in \mathbb{N}_{0}}$ and the weights
\begin{equation*}
q_{j}(x):= e^{\psi(\phi_{j}(x))} > 0, \; j \in \mathcal{A}.
\end{equation*}
\begin{itemize}
  \item[a)] If $\Phi$  satisfy
  $$ \liminf_{N \to \infty} \frac{A_N(x)}{N} > \mu, \; \forall x \in \Omega,$$
  then the contractive system $\mathcal{R} = (\Omega, \phi_{j},
q_{j})_{j \in \mathcal{A}}$ is well defined and $B_{q}(g)= \mathcal{L}_{\beta, \mu}(g),$ for any $g \in \mathrm{C}(\Omega)$;
  \item[b)]  Suppose that
  \begin{equation}  \label{strong super}
\exists \,\varepsilon>0, \delta \geq 0 \text{ s.t. } \; A_N(x) > (\mu + \varepsilon) N + \delta, \; \forall x \in \Omega, \forall N \in \mathbb{N}_0.
\end{equation}
  If each $A_N$ is Dini continuous and $$\limsup_{i \to \infty} \left(\sum_{N \in \mathbb{N}_0}  \omega_{A_N}(c^{i}) \; (e^{-\beta\, \varepsilon})^{N}\right)^{1/i} <1,$$
  then $\psi$ is Dini continuous. In particular, if the family of potentials is uniformly Lipschitz, that is,  $\operatorname{Lip}(A_N) \leq M$, then $\operatorname{Lip}(\psi) \leq \beta\, M$ (and so $\operatorname{Lip}(\ln(q_{j})) \leq \frac{\beta\, M}{2}$).
\end{itemize}
\end{lemma}

\begin{proof}
   (a) The proof follows easily from the commutativity of the summation in the formula for  $\mathcal{L}_{\beta, \mu}$ if we prove that for each $j \in \mathcal{A}$ the positive series $\sum_{N \in \mathbb{N}_0}  e^{-\beta [ A_N(jx) - \mu N]}$ is convergent. Consider applying the root test:
   $$\limsup_{N \to \infty} \sqrt[N]{e^{-\beta \; [ A_{N}(jx) - \mu (N)]}}= e^{-\beta\; \liminf_{N \to \infty} [ \frac{1}{N}A_{N}(jx) - \mu ]} <1 $$
   if and only if $\liminf_{N \to \infty} \frac{1}{N}A_{N}(x) >  \mu$, which is our hypothesis. 

   \smallskip
   (b) For the second part, we  notice that our assumption $A_N(x) > (\mu + \varepsilon) N + \delta$ ensures that $\liminf_{N \to \infty} \frac{A_N(x)}{N} > \mu, \; \forall x \in \Omega$. Thus, from (a), $\psi$ and $q_{j}(x)$ are well defined.\\
   For each $x \in \Omega$ we define a probability $\nu_x$ over $\mathbb{N}_{0}$ by the formula
$$\int_{\mathbb{N}_{0}} g(N) d\nu_x(N):= \frac{1}{e^{\psi(x)}} \sum_{N \in \mathbb{N}_0}  g(N) \; e^{-\beta [ A_N(x) - \mu N]},$$
for any continuous function $g: \mathbb{N}_{0} \to \mathbb{R}$.\\
A consequence from our assumption \eqref{strong super}, is that for any $x \in \Omega$ we have
$$ A_N(x) > (\mu+ \varepsilon) N + \delta \Leftrightarrow e^{-\beta [ A_N(x) - \mu N]} < e^{-\beta [ \varepsilon N + \delta]}.$$
Moreover,
$$e^{\psi(x)}= \sum_{N \in \mathbb{N}_0}  e^{-\beta [ A_N(x) - \mu N]}> e^{-\beta [ A_0(x) - \mu 0]}= e^{-\beta  A_0(x)}> e^{-\,\beta  \|A_0\|_{0}}=\gamma >0,$$
thus,
$$\int_{\mathbb{N}_{0}} g(N) d\nu_x(N)< \frac{1}{\gamma} \sum_{N \in \mathbb{N}_0}  g(N) \; e^{-\beta [ \varepsilon N + \delta]}$$
or
\begin{equation}\label{eq: estima integral}
\sup_{x \in \Omega} \int_{\mathbb{N}_{0}} g(N) d\nu_x(N) < \frac{1}{\gamma e^{\beta  \delta}} \sum_{N \in \mathbb{N}_0}  g(N) \; (e^{-\beta\, \varepsilon})^{N}
\end{equation}
for any continuous function $g: \mathbb{N}_{0} \to \mathbb{R}$.\\
We notice that,  for any $c \in(0,1)$ and $x,y \in \Omega$ with $d(x,y) < c$ we have
   $$\psi(x)- \psi(y) = - \ln\left(\frac{\sum_{N \in \mathbb{N}_0}  e^{-\beta [ A_N(y) - \mu N]}}{\sum_{N \in \mathbb{N}_0}  e^{-\beta [ A_N(x) - \mu N]}} \right) = $$
   $$= - \ln\left(\frac{\sum_{N \in \mathbb{N}_0}
   e^{-\beta [ A_N(y) - A_N(x)]} \; e^{-\beta [ A_N(x) - \mu N]}}{e^{\psi(x)}} \right) = $$
   $$= - \ln\left(\int_{\mathbb{N}_{0}} e^{-\beta [ A_N(y) - A_N(x)]} d\nu_x(N)\right) \leq \int_{\mathbb{N}_{0}}- \ln\left( e^{-\beta [ A_N(y) - A_N(x)]}\right) d\nu_x(N)= $$
   $$=\beta \int_{\mathbb{N}_{0}} [A_N(y) - A_N(x)]  d\nu_x(N)  \leq  \beta \int_{\mathbb{N}_{0}} \omega_{A_N}(c)  d\nu_x(N).$$
   In the above inequality, we used the fact that $-\ln(\cdot)$ is a convex function and $\nu_x$ is a probability, so  Jensen's inequality holds.\\
   Thus
   $$ \sum_{i=1}^{\infty} \omega_{\psi} \left(c^i\right) \leq \beta \sum_{i=1}^{\infty} \sup_{x \in \Omega} \int_{\mathbb{N}_{0}} \omega_{A_N}(c^{i})  d\nu_x(N).$$
   From \eqref{eq: estima integral} we get
   $$ \sup_{x \in \Omega} \int_{\mathbb{N}_{0}} \omega_{A_N}(c^{i}) d\nu_x(N) < \frac{1}{\gamma e^{\beta  \delta}} \sum_{N \in \mathbb{N}_0}  \omega_{A_N}(c^{i}) \; (e^{-\beta\, \varepsilon})^{N},$$
   thus
   $$ \sum_{i=1}^{\infty} \omega_{\psi} \left(c^i\right) \leq \frac{\beta}{\gamma e^{\beta  \delta}}    \sum_{i=1}^{\infty} \left(\sum_{N \in \mathbb{N}_0}  \omega_{A_N}(c^{i}) \; (e^{-\beta\, \varepsilon})^{N}\right).$$
   By the root test, this non negative series is convergent if
   $$\limsup_{i \to \infty} \left(\sum_{N \in \mathbb{N}_0}  \omega_{A_N}(c^{i}) \; (e^{-\beta\, \varepsilon})^{N}\right)^{1/i} <1$$
   which is a pretty easy condition to fulfill, since for each $N$ we have $\omega_{A_N}(c^{i}) \to 0$ when $i \to \infty$, because $A_N$ is Dini continuous, we just need to have some controlled growing with respect to $N$ (see remarks after).\\
   In order to conclude our proof we notice that, if $\operatorname{Lip}(A_N) \leq M$ then we get $\omega_{A_N}(c^{i}) \leq  M c^{i}, \forall N \in \mathbb{N}_{0}$, then from the previous computations
   $$ \omega_{\psi} \left(c^i\right) \leq \beta  \sup_{x \in \Omega} \int_{\mathbb{N}_{0}} \omega_{A_N}(c^{i})  d\nu_x(N) \leq \beta  \sup_{x \in \Omega} \int_{\mathbb{N}_{0}} M c^{i}  d\nu_x(N)= \beta\, M\; c^{i},  $$
   that is $\operatorname{Lip}(\psi) \leq \beta\, M$.
\end{proof}

\begin{remark}
We notice that in Lemma~\ref{lem:equal_operat_finite},  item (a),  the condition
\begin{equation*}
\liminf_{N \to \infty} \frac{1}{N}A_{N}(x) > \mu
\end{equation*}
is only sufficient. As a matter of fact, if we assume that $(A_N)_{N \in
\mathbb{N}_{0}}$, is an increasing sequence of functions then we can use
Dalembert's convergence test:
\begin{equation*}
\limsup_{N \to \infty}\frac{e^{-\beta [ A_{N+1}(jx) - \mu (N+1)]}}{e^{-\beta
[ A_N(jx) - \mu N]}}=\limsup_{N \to \infty} e^{-\beta [ A_{N+1}(jx) -
A_N(jx) - \mu]}<1,
\end{equation*}
if and only if $\limsup_{N \to \infty} A_{N+1}(jx) - A_N(jx) - \mu >0$,
which is the case because $-\mu >0$ and $A_{N+1}(jx) > A_N(jx)$ by
hypothesis.
\end{remark}

\begin{example} \label{ex:energy}
   We  choose $A_{N}(x):=N\, E(x)\geq 0$ for any $x \in \Omega$, where the energy $E:\Omega \to \mathbb{R}$ is Lipschitz (or even Dini) continuous. Taking $\delta=0$, $0<\varepsilon < \min E(x) -\mu$ we get
   $$  A_N(x) > (\mu + \varepsilon) N + \delta, \; \forall x \in \Omega, \forall N \in \mathbb{N}_0.$$
   Moreover, each $A_N$ is Dini continuous and $\omega_{A_N}(c^{i}) \leq N \omega_{E}(c^{i}) \leq  N\, \operatorname{Lip}(E) \, c^{i}$ thus
   $$\limsup_{i \to \infty} \left(\sum_{N \in \mathbb{N}_0}  \omega_{A_N}(c^{i}) \; (e^{-\beta\, \varepsilon})^{N}\right)^{1/i} \leq $$ $$\leq c\; \limsup_{i \to \infty} \left(\operatorname{Lip}(E) \;\sum_{N \in \mathbb{N}_0} N\, (e^{-\beta\, \varepsilon})^{N}\right)^{1/i} <1.$$
   Thus, from Lemma~\ref{lem:equal_operat_finite},  item (b), $\psi$ is Dini continuous.
\end{example}

{\begin{example} \label{ex:superlinear non Lipsc}
   In this example, we provide a different construction of a Dini continuous grand-canonical potential $\psi$, where each potential  $A_N:\{0,1\}^\mathbb{N} \to \mathbb{R}$ is Lipschitz continuous (but with unbounded Lipschitz constant).

   We  choose $A_{N}(x):=\theta(N)\sum_{j=1}^{+\infty} \frac{x_{j}}{2^j}$ for any $x \in \Omega$, where $\theta(N)$ is a given sequence of nonnegative real numbers.  Notice that $A_{N}(x)=B_{N}(t)=(B_{N}\circ\pi)(x)$, where $t=\pi(x):=\sum_{j=1}^{+\infty} \frac{x_{j}}{2^j}$ and $B_{N}(t):=\theta(N) \, t,\; t \in [0,1]$. As $\pi$ preserves distance we just need to check if
   $$\psi(t):=\ln\left(\sum_{N=0}^{\infty}  e^{-\beta [ B_N(t) - \mu N]}\right)=\ln\left(\sum_{N=0}^{\infty}  e^{-\beta [ \theta(N) \, t - \mu N]}\right),\; t \in [0,1]$$
   is Dini (off course, $\psi(x):=\psi(\pi(x))$, for any $x \in \Omega$, if there is no risk of confusion).\\
   Applying our criteria and the fact that $\omega_{A_N}(t) \leq \theta(N) \, t$, we obtain
   $$\limsup_{i \to \infty} \left(\sum_{N \in \mathbb{N}_0}  \omega_{A_N}(c^{i}) \; (e^{-\beta\, \varepsilon})^{N}\right)^{1/i}  \leq \limsup_{i \to \infty} \left(\sum_{N \in \mathbb{N}_0}  \theta(N) \, c^{i}  \; (e^{-\beta\, \varepsilon})^{N}\right)^{1/i} =$$
   $$c \;\limsup_{i \to \infty} \left(\sum_{N \in \mathbb{N}_0} \theta(N) \, (e^{-\beta\, \varepsilon})^{N}\right)^{1/i} <1,$$
   provided that $\sum_{N \in \mathbb{N}_0} \theta(N) \, (e^{-\beta\, \varepsilon})^{N}< \infty$. By the root test we must have
   $$\limsup_{N \to \infty} \left(\theta(N)\right)^{1/N} <e^{\beta\, \varepsilon}$$
   to ensure that $\psi$ is Dini continuous. Particular choices would be $\theta(N):=\ln(1+N)$ or $\theta(N):=N^\alpha, \alpha>0$.  In these cases $\limsup_{N \to \infty} \left(\theta(N)\right)^{1/N} = 1 <e^{\beta\, \varepsilon}$ because $\beta\, \varepsilon>0$.
\end{example}

The next example shows that we may have a Dini continuous grand-canonical potential $\psi$ which is not Lipschitz or H\"older continuous. This shows that the generality of our results cannot be reduced to a direct application of the classical Ruelle Theorem, which is well known for the case of  Lipschitz (or H\"older) potentials (note that our IFS is contractive as in \cite{Fan}).

{\begin{example} \label{Dini-notH} We follow the notation  of Theorem 2.1 in \cite{W}.  We will present an example where $\psi$ is Dini but not Lipschitz. Consider the Bernoulli space $\Omega=\{0,1\}^\mathbb{N}$ and we are going to define a function $u:\Omega \to \mathbb{R}$.
We set  for $0<t<1$ fixed,
$$ u(0^p 1 z) = t\,  e^{p^{-2 - \epsilon}}=\gamma_p= \delta_p= u(1^p 0 z),$$
and in the other points we define $u$ in such way that $u(0z) + u(1 z)=1$, for all $z$. That is, we take
$$ u(1 0^p 1 z) =1- \gamma_{p+1}=1-  \delta_{p+1}= u(0 1^p 0 z).$$
We set $t<1$, in such that   there exist  a $c\in(0,1)$ such that $c\leq \gamma_p$ and $\gamma_p \leq (1-c)$ (see paragraph before Theorem 2.1 in \cite{W}).\\
The normalized  potential $\log u$ is Walters (also Dini)  according to Theorem 2.1 (ii).   Indeed, it follows from Lemma 2.1 in \cite{W} that the variation of $ \log u$ on a cylinder of size $p$ (in our example)  is of order $p^{-2 - \epsilon}$.  Therefore, $\psi=\log u$   is not Lipschitz (or  H\"older).

Now we are going to define a family of potentials $A_n:\Omega \to \mathbb{R}$.

For fixed $p$, and for each $N =0,1,2,...,$ define
$$B_{N,p}= B_N(0^p 1 z )=\gamma_{p,N}=\delta_{p,N}= B_N(1^p 0 z )=t\, \frac{1}{N!} [p^{-2 - \epsilon}]^N.$$
In this way $\sum_{N=0}^\infty B_N (0^p 1 z ) = t\, e^{p^{-2 - \epsilon}}= \sum_{N=0}^\infty B_N (1^p 0 z ).$
We also take
$$ B_N(1 0^p 1 z) = B_N(0 1^p 0 z),$$
in such a way that
$$ \sum_{N=0}^\infty B_N (1 0^p 1 z ) =1-  t\, e^{p^{-2 - \epsilon}}= \sum_{N=0}^\infty B_N  (0 1^p 0 z ).$$
Now taking $A_{N,p}(0^p 1 z)= - \ln B_{N,p} + N \mu $, it follows that   $e^{- [ A_N(0^p 1 z) - \mu N]}=B_{N,p}.$ In this way the normalized potential
$$\psi(x)= \ln u(x)  = \ln\left(\sum_{N \in \mathbb{N}_0}  e^{-\beta [ A_N(x) - \mu N]}\right)$$
is a Dini potential which is not Lipschitz (or H\"older).
\end{example}

Now we can state our main result in this section, the existence of a
positive eigenfunction for the grand-canonical-Ruelle operator $\mathcal{L}_{\beta, \mu}$ and an eigenprobability for $(\mathcal{L}_{\beta, \mu})^{*}$. 

\begin{theorem}
\label{thm: Fan_Boson} Consider $\mathcal{R} = (\Omega, \phi_{j}, q_{j})_{j
\in \mathcal{A}}$ the contractive system in Lemma~\ref{lem:equal_operat_finite}. If the sequence $\Phi = (A_N)_{N \in \mathbb{N}
_{0}}$, satisfies 
  \begin{equation}
\exists \,\varepsilon>0, \delta \geq 0 \text{ s.t. } \; A_N(x) > (\mu + \varepsilon) N + \delta, \; \forall x \in \Omega, \forall N \in \mathbb{N}_0,
\end{equation}
each $A_N$ is Dini continuous and, for some $0<c<1$,
$$\limsup_{i \to \infty} \left(\sum_{N \in \mathbb{N}_0}  \omega_{A_N}(c^{i}) \; (e^{-\beta\, \varepsilon})^{N}\right)^{1/i} <1,$$
then there exists a unique continuous function $h: \Omega \to \mathbb{R}$, $h >0$ and a
unique probability measure $\nu$ on $\Omega$ such that
\begin{equation*}
\mathcal{L}_{\beta, \mu} (h) = \lambda h, (\mathcal{L}_{\beta,
\mu})^{*} (\nu) = \lambda \nu  \text{ and } \nu(h)=1
\end{equation*}
where $\lambda>0$ is the spectral radius of $\mathcal{L}_{\beta, \mu}$.
Moreover, for any $g \in C(\Omega)$ the sequence of functions $\lambda^{-n} (\mathcal{L}_{\beta, \mu})^{n} (g)$ converges uniformly to $\nu(g) h$ and for any probability measure $\theta$ the sequence $\lambda^{-n} (\mathcal{L}_{\beta, \mu}^{*})^{n} (\theta)$ converges weakly to $\theta(h)\nu$.
\end{theorem}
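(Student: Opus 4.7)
The plan is to reduce the statement to a direct application of Fan's Theorem (Theorem 1.1 in \cite{Fan}) for contractive systems with weights. By Lemma~\ref{lem:equal_operat_finite}(a), Assumption~\ref{ass:H2} with $\mathcal{K}=\mu$ ensures that the weights
\[
q_{j}(x) = \sum_{N \in \mathbb{N}_0} e^{-\beta [ A_N(jx) - \mu N]}
\]
are well defined, strictly positive, continuous, and that the associated transfer operator $B_q$ coincides with $\mathcal{L}_{\beta,\mu}$ on $\mathrm{C}(\Omega)$. Together with the fact that the IFS maps $\phi_j(x)=jx$ are uniform $\tfrac{1}{2}$-contractions on $(\Omega,d)$, this sets up the hypotheses of a contractive system as required in \cite{Fan}.

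Next I would verify the remaining hypotheses needed to invoke Fan's theorem in its strongest form: namely, that the weights $q_j$ are positive and continuous (immediate from the above), and that the system satisfies the non-degeneracy / positivity conditions that give uniqueness of the leading eigendata. The strict positivity $q_j(x)>0$ combined with the fact that for every $x,y\in\Omega$ one has $\phi_j(\Omega)$ covering all of $\Omega$ as $j$ ranges over $\mathcal{A}$ (i.e.\ the full-shift structure) yields the primitivity/irreducibility condition that Fan uses to obtain uniqueness of the positive eigenfunction $h$ and of the eigenprobability $\nu$, as well as the simplicity of the leading eigenvalue $\lambda>0$ equal to the spectral radius.

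Once Fan's theorem is applied, we directly obtain the existence of $\lambda>0$, a strictly positive continuous $h$ with $\mathcal{L}_{\beta,\mu}h=\lambda h$, and a probability $\nu$ with $\mathcal{L}_{\beta,\mu}^{*}\nu=\lambda\nu$; after normalization we may assume $\nu(h)=1$. The two convergence statements $\lambda^{-n}\mathcal{L}_{\beta,\mu}^{n}(g)\to \nu(g)\,h$ uniformly and $\lambda^{-n}(\mathcal{L}_{\beta,\mu}^{*})^{n}(\theta)\to \theta(h)\,\nu$ weakly are also part of the conclusion of Fan's theorem (they express the quasi-compactness and the one-dimensional leading spectral projector $g\mapsto \nu(g)\,h$); pairing the first convergence against $\theta$ and using Fubini gives the second one if not stated explicitly in \cite{Fan}.

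The main obstacle I anticipate is matching the precise formulation of the hypotheses in \cite{Fan} to our setting. Fan's theorem is stated for a contractive system with non-negative continuous weights, so the substantive content here is really Lemma~\ref{lem:equal_operat_finite}(a), which guarantees that the weights are finite, continuous, and strictly positive under Assumption~\ref{ass:H2}. Note in particular that Assumption~\ref{ass:H3} is \emph{not} needed for this theorem — Lipschitz regularity of the weights (Lemma~\ref{lem:equal_operat_finite}(b)) is only required later for the analytic dependence (Corollary~\ref{ana}), whereas here mere continuity and positivity suffice. Thus the proof should be essentially a one-line appeal: the system $(\Omega,\phi_j,q_j)$ is contractive with positive continuous weights, hence Fan's theorem yields all the claimed conclusions with $\lambda$ equal to the spectral radius of $\mathcal{L}_{\beta,\mu}$.
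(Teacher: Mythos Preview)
Your overall strategy---reduce to $B_q=\mathcal{L}_{\beta,\mu}$ via Lemma~\ref{lem:equal_operat_finite}(a) and then invoke Fan's Theorem 1.1---is exactly the paper's route. However, there is a genuine gap in how you check the hypotheses of \cite{Fan}. Fan's Ruelle--Perron--Frobenius theorem is \emph{not} stated for contractive systems with merely positive continuous weights; it requires that each $\log q_j$ be \emph{Dini continuous}, i.e.\ that $\int_0^1 \rho(\log q_j,t)\,t^{-1}\,dt<\infty$, where $\rho$ is the modulus of continuity. Your claim that ``mere continuity and positivity suffice'' is incorrect, and dismissing the regularity issue as relevant only to Corollary~\ref{ana} misses the point: some regularity is already needed here.

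This is precisely where Assumption~\ref{ass:H1} enters, and it is the substantive computation in the paper's proof. Using the uniform Lipschitz bound $\mathrm{Lip}(A_N)\le M$ one shows that for any $j\in\mathcal{A}$,
\[
\log q_j(x)-\log q_j(y)\;\le\;\tfrac{\beta M}{2}\,d(x,y),
\]
by estimating $-\beta[A_N(jx)-\mu N]\le \tfrac{\beta M}{2}d(x,y)-\beta[A_N(jy)-\mu N]$ term by term and summing. Symmetrizing gives $|\log q_j(x)-\log q_j(y)|\le \tfrac{\beta M}{2}d(x,y)$, hence $\rho(\log q_j,t)\le \tfrac{\beta M}{2}t$ and the Dini integral is finite. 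Once this is established, Fan's theorem applies and yields all the stated conclusions (eigenfunction, eigenmeasure, normalization, and both convergence statements). Note that this argument uses Assumption~\ref{ass:H1} directly on the $A_N$'s, not Lemma~\ref{lem:equal_operat_finite}(b); the latter gives Lipschitz continuity of $q_j$ itself under the stronger Assumption~\ref{ass:H3}, which is indeed unnecessary here.
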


\begin{proof}
From Lemma~\ref{lem:equal_operat_finite} a) we get $\mathcal{L}_{\beta, \mu}=B_{q}$, thus we can derive our theorem directly from Ruelle-Perron-Frobenius theorem for IFS (see \cite{Fan}, Theorem 1.1), by showing that $\log q_{j}$ is Dini continuous for any $j \in \mathcal{A}$. \\
Since $q_{j}(x):=e^{\psi(\phi_{j}(x))} > 0, \; j \in \mathcal{A}$. The above condition is equivalent to $\log q_{j} = \psi(\phi_{j}(x))$ to be Dini continuous, which is the case because $\phi_{j}$ is Lipschitz continuous and, under our hypothesis, we get from Lemma~\ref{lem:equal_operat_finite} (b) that $\psi$ is Dini continuous.

We recall that a contractive IFS always has attractors, that is, a unique compact set $K \subseteq \Omega$ such that $K=\bigcup_{j \in \mathcal{A}} \phi_{j} (K)$. Although due to the very particular structure of our maps $\phi_{j}$,  we obtain  $\Omega=\bigcup_{j \in \mathcal{A}} \phi_{j} (\Omega)$, thus we will ignore this feature when applying Fan's theorem.
\end{proof}

Now we can translate all the results from the thermodynamical formalism for
an IFS with weights $\mathcal{R}=(\Omega, \phi_{j}, q_{j})_{j \in \mathcal{A}%
}$ following \cite{LO} and its improvement given in \cite{CO17} (a different
approach is given by \cite{Mih} considering measures invariant w.r.t. a skew
product).

The next lemma generalizes the result obtained in Lemma~\ref{lemma-pressure} for the
setting of grand-canonical-Ruelle operators.

\begin{lemma}
\label{powers description} If the family of potentials $\Phi=(A_N)_{N \in \mathbb{N}_{0}} $, satisfy the hypothesis from Theorem~\ref{thm: Fan_Boson},  then
the following limit exists
\begin{align}  \label{1sobreNlnBN}
\lim_{n \to \infty} \frac{1}{n} \log\left((\mathcal{L}_{\beta, \mu})^n(1)
(x) \right) = \log \lambda \;,
\end{align}
the convergence is uniform in $x\in \Omega$ and $\lambda$ is the spectral
radius of $\mathcal{L}_{\beta, \mu}$ acting on $C(\Omega)$. We call %
\eqref{1sobreNlnBN} the log of the  \textit{grand-canonical eigenvalue} $\lambda$.
\end{lemma}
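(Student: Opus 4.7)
The plan is to derive this lemma as a direct corollary of Theorem \ref{thm: Fan_Boson}, which is already available. That theorem guarantees both the existence of an eigenfunction $h>0$ and eigenprobability $\nu$ with eigenvalue $\lambda$ equal to the spectral radius, and, crucially, it gives the uniform convergence
\begin{equation*}
\lambda^{-n}(\mathcal{L}_{\beta,\mu})^n(g)(x) \longrightarrow \nu(g)\, h(x),\qquad g \in C(\Omega).
\end{equation*}
I would apply this to the constant function $g \equiv \mathbf{1}$, so that the limit is $\nu(\mathbf{1})\, h(x) = h(x)$ uniformly in $x \in \Omega$.

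Next, I would exploit the fact that $h$ is continuous and strictly positive on the compact symbolic space $\Omega$, hence there exist constants $0 < m \leq M < \infty$ with $m \leq h(x) \leq M$ for all $x \in \Omega$. From the uniform convergence, for all sufficiently large $n$ we have
\begin{equation*}
\tfrac{1}{2} m \,\leq\, \lambda^{-n}(\mathcal{L}_{\beta,\mu})^n(\mathbf{1})(x) \,\leq\, 2M,\qquad x \in \Omega.
\end{equation*}
Taking logarithms and writing
\begin{equation*}
\frac{1}{n}\log\bigl((\mathcal{L}_{\beta,\mu})^n(\mathbf{1})(x)\bigr) = \log\lambda + \frac{1}{n}\log\bigl(\lambda^{-n}(\mathcal{L}_{\beta,\mu})^n(\mathbf{1})(x)\bigr),
\end{equation*}
the second term is squeezed between $\frac{1}{n}\log(m/2)$ and $\frac{1}{n}\log(2M)$, hence tends to $0$ uniformly in $x$. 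This yields the claimed uniform limit $\frac{1}{n}\log((\mathcal{L}_{\beta,\mu})^n(\mathbf{1})(x)) \to \log\lambda$.

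There is essentially no obstacle in this argument, since all the hard analytic work — establishing existence of $h$, $\nu$, and the uniform convergence of the iterates — is already encapsulated in Theorem \ref{thm: Fan_Boson} via Fan's version of the Ruelle–Perron–Frobenius theorem. The only minor point worth stating carefully is the uniform two-sided bound on $h$, which relies on compactness of $\Omega$ together with the strict positivity of $h$, and this ensures that the correction term vanishes uniformly rather than merely pointwise.
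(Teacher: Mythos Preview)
Your proof is correct and follows essentially the same route as the paper: both derive the lemma from Theorem~\ref{thm: Fan_Boson}. The paper invokes \cite{CO17}, Lemma~1, together with the eigenfunction provided by Theorem~\ref{thm: Fan_Boson}, whereas you spell out directly the elementary squeeze argument from the uniform convergence $\lambda^{-n}(\mathcal{L}_{\beta,\mu})^n(\mathbf{1})\to h$ and the uniform positivity of $h$ on the compact space $\Omega$; this is precisely the content of the cited lemma, so the two proofs are effectively the same.
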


 The expression \eqref{1sobreNlnBN} also represents the dynamical partition function and is the dynamical analogous of the microcanonical partition function \eqref{hh121}.

\begin{proof}
  Consider $\mathcal{R}=(\Omega, \phi_{j}, q_{j})_{j \in \mathcal{A}}$ the contractive system in Lemma~\ref{lem:equal_operat_finite}. So, we get $\mathcal{L}_{\beta, \mu}=B_{q}$ and, thus, we can derive our theorem directly from \cite{CO17}, Lemma 1, and from Theorem~\ref{thm: Fan_Boson} who gives us a positive eigenfunction for $B_{q}$, with eigenvalue $\lambda$.
\end{proof}

\begin{definition}
\label{lolo} A \textit{holonomic probability} $\hat{\nu}$ with respect to $%
\mathcal{R}$ on $\Omega \times \mathcal{A}^\mathbb{N}$, is a probability such that
\begin{equation*}
\int_{\Omega \times \mathcal{A}^\mathbb{N}_0 } g\left(\phi_{w_1}(x)\right) d \hat{\nu}(x,w)=\int_\Omega g(x) d\nu(x),
\end{equation*}
for all $g:\Omega \rightarrow \mathbb{R}$ continuous, where $\nu$ is the
projection on the first coordinate of $\hat{\nu}$ (i.e., $\int g(x)
d\nu(x):=\int g(x) d \hat{\nu}(x,w)$). The set of all holonomic probability
measures with respect to $\mathcal{R}$ is denoted $\mathcal{H}(\mathcal{R})$.
\end{definition}

\begin{definition}[\protect\cite{LO,CO17}]
\label{entropy} Let $\mathcal{R}=(\Omega, \phi_{j})_{j \in \mathcal{A}}$ be
an IFS and $\hat{\nu} \in \mathcal{H}(\mathcal{R})$. The \textit{variational
entropy} of $\hat{\nu}$ is defined by
\begin{equation}  \label{fie}
h_v(\hat{\nu}) \equiv \inf_{g\in \mathrm{C}^+(\Omega)} \left\{ \int_{\Omega}
\log \frac{B_{1}(g)(x)}{ g(x) } d\nu(x) \right\}
\end{equation}
where $B_{1}(g)(x)= \sum_{j \in \mathcal{A}} g(\phi_{j}(x))$.

Call such  variational entropy  the grand-canonical entropy when applied to the case we consider here.

\end{definition}

From \cite{LO}, Proposition 19 (see also \cite{CO17}, Theorem 10), we know
that $0 \leq h_v(\hat{\nu}) \leq r=\sharp(\mathcal{A})$ for any $\hat{\nu}
\in \mathcal{H}(\mathcal{R})$.

Inspired by \cite{CO17}, Definition 11, we can now introduce the concept of
grand-canonical topological pressure.

\begin{definition}
\label{Pressure} Consider the IFS with weights $\mathcal{R}=(\Omega,
\phi_{j}, q_{j})_{j \in \mathcal{A}}$. The \textit{grand-canonical
topological pressure} of $Q=(q_{j})_{j \in \mathcal{A}}$, is defined by the
following expression
\begin{equation}  \label{fip}
P(Q) :=\sup_{\hat{\nu} \in \mathcal{H}(\mathcal{R})} \inf_{ g \in
C^+(\Omega)} \left\{ \int_{\Omega} \log \frac{\mathcal{L}_{\beta, \mu}(g)}{
g } d\nu \right\},
\end{equation}
assuming $\mathcal{L}_{\beta, \mu} = B_{q}$.
\end{definition}

\begin{theorem}
\label{thm: equili_IFS_fin} Consider $(\Omega, \phi_{j}, q_{j})_{j \in
\mathcal{A}}$ the contractive system in Lemma~\ref{lem:equal_operat_finite}.
Assume that  the  family of potentials  $\Phi=(A_N)_{N \in \mathbb{N}_{0}}$, satisfy the hypothesis from Theorem~\ref{thm: Fan_Boson}  and $\mathcal{L}_{\beta, \mu} = B_{q}$.

Denote by $\psi(y):= \ln\left(\sum_{N \in
\mathbb{N}_0} e^{-\beta [ A_N(y) - \mu N]}\right),\; y \in \Omega$, the grand-canonical potential. Then:

\begin{enumerate}
\item[a)] $\displaystyle P(Q)=P(\psi):= \sup_{\hat{\nu} \in \mathcal{H}(
\mathcal{R})} \left\{ h_{v}(\hat{\nu})+ \int_{\Omega} \psi(y)\, d\nu(y)\right\}.$\\

\item[b)] The set of equilibrium states, that is, holonomic measures $\hat{%
\mu}$ satisfying $P(\psi) = h_{v}(\hat{\mu})+ \int_{\Omega} \psi\,
d\mu$, is not empty;

\item[c)] $P(Q)=\log(\lambda)$, where $\lambda$ is the  grand-canonical eigenvalue given by
Theorem~\ref{thm: Fan_Boson}.
\end{enumerate}
\end{theorem}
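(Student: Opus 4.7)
The plan is to reduce the three claims to a clean algebraic manipulation between the two transfer operators $B_q$ and $B_1$, then exploit the eigenstructure given by Theorem~\ref{thm: Fan_Boson} and standard compactness arguments on $\mathcal{H}(\mathcal{R})$. The starting observation is that, by Lemma~\ref{lem:equal_operat_finite}, the weights $q_j(x) = \sum_{N\in\mathbb{N}_0} e^{-\beta[A_N(jx) - \mu N]}$ satisfy $q_j(x) = \psi(\phi_j(x))$, so
$$\mathcal{L}_{\beta,\mu}(g)(x) = \sum_{j \in \mathcal{A}} \psi(\phi_j(x))\, g(\phi_j(x)) = B_1(\psi\cdot g)(x),$$
where $B_1$ is the unweighted operator that appears in Definition~\ref{entropy}.

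For item (a), I would use the substitution $\tilde g := \psi\cdot g$, which is a bijection of $\mathrm{C}^+(\Omega)$ onto itself since $\psi$ is continuous and strictly positive. The integrand of \eqref{fip} rewrites as
$$\log\frac{\mathcal{L}_{\beta,\mu}(g)}{g} \;=\; \log\frac{B_1(\tilde g)}{\tilde g/\psi} \;=\; \log\psi \,+\, \log\frac{B_1(\tilde g)}{\tilde g},$$
so that $\inf_{g\in\mathrm{C}^+(\Omega)} \int\log[\mathcal{L}_{\beta,\mu}(g)/g]\,d\nu = \int\log\psi\,d\nu + h_v(\hat\nu)$ by Definition~\ref{entropy}. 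Taking the supremum over $\hat\nu\in\mathcal{H}(\mathcal{R})$ yields (a).

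For item (c), plug the eigenfunction $h$ of Theorem~\ref{thm: Fan_Boson} into the infimum of \eqref{fip}: the ratio $\mathcal{L}_{\beta,\mu}(h)/h$ is the constant $\lambda$, hence $\inf_{g}\int\log[\mathcal{L}_{\beta,\mu}(g)/g]\,d\nu \leq \log\lambda$ for every $\hat\nu$, giving $P(Q)\leq\log\lambda$. For the reverse direction I would lift the eigenprobability $\nu$ of $\mathcal{L}_{\beta,\mu}^*$ to a holonomic measure on $\Omega\times\mathcal{A}^{\mathbb{N}_0}$ in the canonical Markov way, using the transition kernel $p_j(x) := \lambda^{-1}\,q_j(x)\,h(\phi_j(x))/h(x)$, which is a genuine probability kernel by the eigenfunction equation; on this lift one checks that $g=h$ actually achieves the infimum, so (a) forces $P(Q)\geq\log\lambda$. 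Alternatively, Lemma~\ref{powers description} together with the variational results of \cite{LO,CO17} identify the pressure with $\log\lambda$ directly.

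For item (b), I would rely on compactness and semicontinuity. The set $\mathcal{H}(\mathcal{R})$ is weak-$*$ compact, and $\hat\nu\mapsto\int\log\psi\,d\nu$ is weak-$*$ continuous because $\log\psi\in\mathrm{Lip}(\Omega)$ (the Lipschitz estimate in the proof of Theorem~\ref{thm: Fan_Boson} applies verbatim with $j$ suppressed). Moreover $h_v$, being the pointwise infimum of the weak-$*$ continuous affine functionals $\hat\nu\mapsto\int\log[B_1(g)/g]\,d\nu$, is upper semicontinuous. Hence the functional in (a) attains its supremum on $\mathcal{H}(\mathcal{R})$. The step I expect to be the main obstacle is the matching reverse inequality in (c): one must verify that the Markov lift just described genuinely belongs to $\mathcal{H}(\mathcal{R})$ and that $g=h$ realizes the infimum on it, which is precisely where the holonomic invariance must be reconciled with the normalization encoded by the eigenfunction equation, and it is the point at which the fine structure developed in \cite{LO,CO17} is essential.
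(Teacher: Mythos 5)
Your proposal is correct in outline, but it is a genuinely more self-contained route than the paper's. The paper's own proof is essentially a reduction plus citations: it observes (as you do) that the weights are homogeneous, $q_j=\psi\circ\phi_j$ with $\psi$ well defined by Assumption~\ref{ass:H2}, and then imports item a) from the homogeneous-case formula in \cite{CO17} (Definition 11), item b) from \cite{CO17} (Theorem 13), and item c) from \cite{LO} (Theorem 22) combined with Theorem~\ref{thm: Fan_Boson}. Your substitution $\tilde g=\psi g$, giving $\mathcal{L}_{\beta,\mu}(g)=B_1(\psi g)$ and hence $\inf_g\int\log\frac{\mathcal{L}_{\beta,\mu}(g)}{g}\,d\nu=\int\log\psi\,d\nu+h_v(\hat\nu)$, is a clean inline proof of exactly the identity the paper borrows for a); and your compactness/upper-semicontinuity argument for b) is the standard mechanism behind the cited Theorem 13 (you should at least remark that $\mathcal{H}(\mathcal{R})\neq\emptyset$, e.g.\ by lifting the stationary measure of the uniform kernel $p_j\equiv 1/r$, but this is routine). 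What your approach buys is independence from the precise statements in \cite{CO17}; what the paper's buys is brevity.

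The one place where your sketch, as written, would not go through is the lower bound in c). First, the Markov lift must be built over the measure $\rho:=h\,d\nu$ (normalized by $\nu(h)=1$), not over the eigenprobability $\nu$ itself: holonomy of the lift is exactly the stationarity $\int\sum_j p_j(x)F(\phi_j(x))\,d\rho(x)=\int F\,d\rho$, which holds for $\rho=h\nu$ because $\int\sum_j p_j F(\phi_j)\,h\,d\nu=\lambda^{-1}\int\mathcal{L}_{\beta,\mu}(hF)\,d\nu=\int hF\,d\nu$, whereas the lift of $\nu$ by the kernel $p_j=\lambda^{-1}q_j\,(h\circ\phi_j)/h$ is not holonomic unless $h$ is constant. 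Second, "one checks that $g=h$ achieves the infimum" is the substantive step, and it needs an argument: writing $\frac{\mathcal{L}_{\beta,\mu}(g)}{g}=\lambda\frac{h}{g}\sum_j p_j\frac{g\circ\phi_j}{h\circ\phi_j}$ and applying Jensen's inequality to $\log$ gives $\log\frac{\mathcal{L}_{\beta,\mu}(g)}{g}\geq\log\lambda+\log\frac{h}{g}+\sum_j p_j\log\frac{g\circ\phi_j}{h\circ\phi_j}$, and integrating against $\rho$ the last two terms cancel by the stationarity above, so $\inf_g\int\log\frac{\mathcal{L}_{\beta,\mu}(g)}{g}\,d\rho=\log\lambda$ and hence $P(Q)\geq\log\lambda$. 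With these two repairs your direct argument is complete; alternatively, your fallback of invoking Lemma~\ref{powers description} and \cite{LO} (Theorem 22) is precisely what the paper does and also closes c).
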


\begin{proof}
a) In the general setting presented in \cite{CO17}, Definition 11, one can consider the homogeneous case, that is, when $\psi:\Omega \to \mathbb{R}$ is a positive continuous function  and $\mathcal{R}= (\Omega, \phi_{j}, q_{j})_{j \in \mathcal{A}}$, with $q_{j}=\psi\circ\phi_{j}$ for each $j \in \mathcal{A}$. In this case, the topological pressure of $\psi$ is alternatively given by
\[
P(\psi)=
\sup_{\hat{\nu} \in \mathcal{H}(\mathcal{R})}
\left\{ h_{v}(\hat{\nu})+  \int_{\Omega} \psi\,  d\nu\right\}.
\]
Actually, this is the case for the weights $q_j$ obtained for a family $\Phi=(A_N)_{N \in \mathbb{N}_{0}}$ as in Lemma~\ref{lem:equal_operat_finite}. Indeed, given $\displaystyle q_{j}(x):= \sum_{N \in \mathbb{N}_0}  e^{-\beta [ A_N(jx) - \mu N]} > 0,\;$ $ j \in \mathcal{A}$ we can choose the function
\begin{equation} \label{liy}\psi(y):=\ln\left( \sum_{N \in \mathbb{N}_0}  e^{-\beta [ A_N(y) - \mu N]}\right),\end{equation}
which is well defined. It follows immediately that $q_{j}=e^{(\psi\circ\phi_{j})}, \; j \in \mathcal{A} $.
b) We say that the holonomic measure $\hat{\mu}$ is an equilibrium state for
$\psi$ if $h_{v}(\hat{\mu})+  \int_{\Omega} \psi(x)\, d\mu(x) = P(\psi).$
From \cite{CO17}, Theorem 13,  we know that for such IFS and $\psi:\Omega\to\mathbb{R}$ a continuous function, so that $e^\psi$ is positive, the set of equilibrium states for $\psi$ is not empty.\\
c) Is a direct consequence of \cite{LO}, Theorem 22, and Theorem~\ref{thm: Fan_Boson}.
\end{proof}

\medskip

The next corollary highlights the importance of the results about the existence of an eigenfunction in our version of  Ruelle's Theorem.  Under the validity of the  hypothesis of Theorem~\ref{thm: Fan_Boson} we will show the analytic dependence  of the eigenfunction $\lambda$ as a function of $\psi$ in \eqref{liy}.

\smallskip

\begin{corollary} \label{ana}
  If the family $\Phi=(A_N)_{N \in \mathbb{N}_{0}}$, satisfies the hypothesis of Theorem~\ref{thm: Fan_Boson} and the family of potentials is uniformly Lipschitz, that is,  $\operatorname{Lip}(A_N) \leq M$, for some fixed $m$, then, the eigenfunction  varies analytically  on the grand-canonical potential $\psi$ given by \eqref{liy}. From this follows the analyticity of the pressure (and also of the eigenvalue $\lambda$) as a function of $\psi$.
\end{corollary}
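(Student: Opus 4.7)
The plan is to place the argument in the standard framework of analytic perturbation theory for quasi-compact transfer operators, in the spirit of Kato. Consider the Banach space $E=\mathrm{Lip}(\Omega)$ equipped with $\|f\|_{\mathrm{Lip}}=\|f\|_\infty+\mathrm{Lip}(f)$, and, for a positive $\psi\in E$, the operator $L_\psi:E\to E$ defined by
\begin{equation*}
L_\psi(f)(x)\;=\;\sum_{j\in\mathcal{A}}\psi(\phi_j(x))\,f(\phi_j(x)),
\end{equation*}
so that by Lemma~\ref{lem:equal_operat_finite} one has $L_\psi=\mathcal{L}_{\beta,\mu}$ with $q_j=\psi\circ\phi_j$. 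Assumption~\ref{ass:H1} together with Assumption~\ref{ass:H3} gives, via Lemma~\ref{lem:equal_operat_finite}(b), that $\psi$ is itself Lipschitz, and the identity $\mathrm{Lip}(\phi_j)=1/2$ shows that $L_\psi$ does map $E$ into itself continuously. The first key observation is that $\psi\mapsto L_\psi$ is \emph{linear} from $E$ to $\mathcal{B}(E)$, since $L_{\psi_1+\psi_2}=L_{\psi_1}+L_{\psi_2}$; in particular, it is entire as an operator-valued map.

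Next, I would establish a spectral gap for $L_\psi$ on $E$. Theorem~\ref{thm: Fan_Boson} already provides a simple positive leading eigenvalue $\lambda=\lambda(\psi)>0$ on $C(\Omega)$, with strictly positive eigenfunction $h$, and rank-one asymptotic behaviour $\lambda^{-n}L_\psi^n(g)\to\nu(g)h$ uniformly. To promote this to a spectral gap on $E$, I would prove a Lasota--Yorke inequality of the form
\begin{equation*}
\mathrm{Lip}(L_\psi^n f)\;\leq\;C_1\,2^{-n}\,\mathrm{Lip}(f)\;+\;C_2\,\|f\|_\infty,
\end{equation*}
exploiting $\mathrm{Lip}(\phi_j)=1/2$ and the Lipschitz bound on each weight from Lemma~\ref{lem:equal_operat_finite}(b). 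Standard Ionescu-Tulcea--Marinescu / Hennion arguments then give quasi-compactness of $L_\psi$ on $E$, and the uniqueness part of Theorem~\ref{thm: Fan_Boson} forces $\lambda$ to be a simple eigenvalue of $L_\psi$ on $E$, isolated from the rest of the spectrum.

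With analyticity (in fact linearity) of $\psi\mapsto L_\psi$ and the spectral gap in place, I would conclude by invoking Kato's analytic perturbation theorem: a simple isolated eigenvalue of an analytic operator-valued map depends analytically on the parameter, and so do the associated normalized eigenvector and spectral projection. Applied here, this gives that $\lambda(\psi)$ and $h(\psi)$ are analytic functions of $\psi$ in the open cone of positive Lipschitz functions; since $P(Q)=\log\lambda$ by Theorem~\ref{thm: equili_IFS_fin}(c) and $\lambda>0$, the pressure is analytic in $\psi$ as well.

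The main obstacle will be the Lasota--Yorke step and the accompanying verification that no part of the residual spectrum accumulates at $\lambda$. The linearity of $\psi\mapsto L_\psi$ makes the perturbation-theoretic half of the argument almost tautological, but the spectral gap requires careful control of the Lipschitz seminorm of $L_\psi^n f$, and this is precisely where Assumption~\ref{ass:H3} enters, through the explicit Lipschitz bound on $\psi$ supplied by Lemma~\ref{lem:equal_operat_finite}(b). A cleaner alternative, bypassing Lasota--Yorke, would be a Birkhoff--Hilbert cone-contraction argument on a cone of positive log-Lipschitz functions, which would yield simplicity and isolation of $\lambda$ directly from strict positivity of the weights $q_j$.
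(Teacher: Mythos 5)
Your proposal is correct and follows essentially the same route as the paper: quasi-compactness of the transfer operator on the Lipschitz space (the paper cites Hennion/Ye for the essential-spectral-radius bound where you propose proving a Lasota--Yorke inequality directly), simplicity and isolation of the leading eigenvalue, and then holomorphic perturbation theory exploiting the (linear, hence analytic) dependence of $L_\psi$ on $\psi$ --- the paper's Cauchy-integral/Riesz-projection argument is exactly the content of the Kato theorem you invoke. The only cosmetic difference is that the paper derives simplicity from positivity of the weights (via \cite{BCLMS23}) rather than from the uniqueness/convergence statement of Theorem~\ref{thm: Fan_Boson}, but both are valid and the overall structure is the same.
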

\begin{proof} First note that from our hypothesis we get, from Lemma~\ref{lem:equal_operat_finite} (b), that the weights of our IFS are Lipschitz continuous. 

As seen in \cite[Remark 2]{BCLMS23}, whose reasoning is similar to \cite{MR1085356}, the positiveness of the transfer operator (and the fact that the attractor is $\Omega$) means that the dimension of the eigenspace associated to the spectral radius $\lambda$ is one. Also, since the weights are Lipschitz continuous according to Lemma~\ref{lem:equal_operat_finite}, we know from \cite{MR1085356} that the essential spectrum of the operator is contained in a disc of radius strictly smaller than $\lambda$. For a general version of this result (which contemplates the case of the Grand-canonical potential $\psi$ considered by us) we refer the reader to \cite{He} and \cite{Ye}. This implies that the main eigenvalue is isolated. Using a standard argument of complex analysis (Cauchy's integral formula for bounded operators on Banach spaces) we get the analyticity (see for instance Theorem 5.1 in \cite{Mane}, \cite{SilvaFe},
 or Proposition 35 and 36 in \cite{Lop1}); the reasoning here  should follow exactly the same procedures: take a circle path around the eigenvalue on the complex plane, etc,  and we leave the  details for the reader.
\end{proof}


\subsection{An alternative setting via an infinite countable IFS with
weights.}

\label{outy}

The idea in this section is to choose an infinite countable IFS with weights
whose transfer operator matches the grand-canonical-Ruelle operator $%
\mathcal{L}_{\beta, \mu}$ (that appears in \eqref{BRO}), so that the
thermodynamical formalism for $\mathcal{L}_{\beta, \mu}$ can be derived from
the well-known thermodynamical formalism for that kind of system, for
details see \cite{Urb}. We have to show that our model fits the hypothesis
of \cite{Urb}.

In this section, the emphasis will be on results for the grand-canonical-dual-Ruelle operator $\mathcal{L}_{\beta, \mu}^*$ and not so much for the
grand-canonical-Ruelle operator $\mathcal{L}_{\beta, \mu}$. We will be able
to obtain the same claims as presented in Theorem \ref{thm: equili_IFS_fin}
but without the need to show the existence of an eigenfunction for the
operator $\mathcal{L}_{\beta, \mu}$.

We start by setting up the appropriate IFS and choosing maps and weights.
After showing that the transfer operator for that IFS coincides with $%
\mathcal{L}_{\beta, \mu}$ we prove that the IFS satisfies the regularity
requirements from \cite{Urb}. Finally, we introduce results on partition
functions and topological pressure characterizing the topological pressure
through the eigenvalue of the dual operator $\mathcal{L}_{\beta, \mu}^{*}$.
In this framework, we do not provide a version of the Ruelle theorem for $%
\mathcal{L}_{\beta, \mu}$ because it is not  known of  any result about the
existence of eigenfunctions in the IFS literature, to the best of our
knowledge.

To simplify the notation, we consider now the alphabet $\mathcal{A}%
:=\{0,...,r-1\}$ instead of $\mathcal{A}:=\{1,...,r\}$. In particular, $%
\Omega:=\mathcal{A}^{\mathbb{N}}=\{0,...,r-1\}^{\mathbb{N}}$ is our symbolic
metric space endowed with the metric
\begin{equation*}
d(x, y) := \left\{
\begin{array}{ll}
2^{-\min\{n \in \mathbb{N} : x_n \neq y_n \}}, & x \neq y; \\
0, & x=y.%
\end{array}
\right.
\end{equation*}
Consider the \textit{countable IFS} $\mathcal{R}:=(\Omega, \phi_{j})_{j \in
I}$ on $I := \mathbb{N}_{0}$, where $\phi_{j}(x)=(j \mod{r})x$ is the
\textit{mnemonic representation} for $(j \mod r, x_1, x_2, ...)$ and $%
x=(x_1, x_2, ...) \in \Omega$. This IFS is obviously contractive w.r.t. the
distance introduced in $\Omega$, that is, ${\rm Lip} (\phi_{j}) = \frac{1}{2} $
for all $j \in \mathbb{N}_0$. Note that the sequence of maps is formed by
repetitions, $\phi_{0}(x)=0x$, $\phi_{1}(x)=1x$, ..., $\phi_{r-1}(x)=(r-1)x$%
, $\phi_{r}(x)=(r \mod r) \, x=0x$, $\phi_{r+1}(x)=(r+1 \mod r)x=1x$, $%
\phi_{r+2}(x)=(r+2 \mod r)x=2x$, and so on.

Here we follow \cite{Urb}, where only positive weights are allowed in  the Ruelle operator. This is
achieved by considering  a countable  \textit{IFS with weights} $\mathcal{R}%
:=(\Omega, \phi_{j}, q_{j})_{j \in \mathbb{N}_0}$, where $Q:=\{ q_{j}: \Omega
\to \mathbb{R}, \; j \in \mathbb{N}_0 \}$ is family of continuous functions.
The new \textit{transfer operator} $B_{q}: \mathrm{C}(\Omega) \to \mathrm{C}%
(\Omega)$ is given by
\begin{equation*}
B_{q}(g)(x)= \sum_{j \in \mathbb{N}_0} e^{q_{j}(x)} g(\phi_{j}(x)),
\end{equation*}
for any $g \in \mathrm{C}(\Omega)$.


\begin{lemma}
\label{weight_count_family} Suppose that $\Phi=(A_N)_{N \in \mathbb{N}_0}$
is an admissible sequence of potentials satisfying  ${\rm Lip} (A_{N}) \leq M$, for some $M>0$. Consider
the function $\xi: \mathbb{N}_0 \to \mathbb{N}_0$ defined by
\begin{equation*}
\xi(j)=\frac{j - (j\mod r)}{r}.
\end{equation*}
If we choose the weights $q_{j}(x):= - \,\beta\, (A_{\xi(j)}(\phi_{j}(x))-
\xi(j)\, \mu), \; j \in \mathbb{N}_0$ and construct the IFS with weights $%
\mathcal{R}:=(\Omega, \phi_{j}, q_{j})_{j \in \mathbb{N}_0}$ then
\begin{equation*}
B_{q}(g)(x)= \mathcal{L}_{\beta, \mu}(g) (x)
\end{equation*}
for any $g \in \mathrm{C}(\Omega)$. In particular $(q_{j})_{j \in \mathbb{N}%
_0}$ is a sequence of uniformly Lipschitz continuous functions with $\sup_{j
\in \mathbb{N}_{0}} {\rm Lip} (q_{j}) \leq \frac{\beta\, M}{2}.$
\end{lemma}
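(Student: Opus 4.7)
The plan is to recognize that the map $j \mapsto (j \bmod r,\, \xi(j))$ is a bijection from $\mathbb{N}_0$ onto $\mathcal{A} \times \mathbb{N}_0$, so that the single countable sum $\sum_{j \in \mathbb{N}_0}$ that defines $B_q$ is literally a reindexing of the double sum $\sum_{N \in \mathbb{N}_0}\sum_{a \in \mathcal{A}}$ appearing in $\mathcal{L}_{\beta,\mu}$. Concretely, writing $a(j) := j \bmod r$, one has $j = r\,\xi(j) + a(j)$, and inverting, for each $(a,N) \in \mathcal{A} \times \mathbb{N}_0$ there is a unique $j = rN + a \in \mathbb{N}_0$ with $a(j) = a$ and $\xi(j) = N$. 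First I would record this bijection explicitly.

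Next I would unpack $B_q(g)(x)$ using the chosen weights. By definition $e^{q_j(x)} = e^{-\beta A_{\xi(j)}(\phi_j(x))}\, e^{\beta\mu\,\xi(j)}$ and $g(\phi_j(x)) = g(a(j)x)$, so after substituting $j = rN+a$ the sum becomes
\[
B_q(g)(x) = \sum_{N \in \mathbb{N}_0}\sum_{a \in \mathcal{A}} e^{\beta\mu N}\, e^{-\beta A_N(ax)} g(ax),
\]
which is precisely $\mathcal{L}_{\beta,\mu}(g)(x)$ by \eqref{BRO}. The reindexing of an unordered sum is legitimate because the admissibility hypothesis guarantees absolute convergence of the double series for every $x \in \Omega$ (and hence termwise in $g$ after bounding $|g|$ by $\|g\|_\infty$); this is the only analytic point in the first part of the proof, and it is essentially handed to us by the definition of admissibility stated just before \eqref{BRO}.

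For the uniform Lipschitz claim, I would simply exploit that the additive constant $\beta\xi(j)\mu$ does not affect Lipschitz estimates, so
\[
|q_j(x)-q_j(y)| = \beta\,|A_{\xi(j)}(\phi_j(x)) - A_{\xi(j)}(\phi_j(y))| \leq \beta\,\mathrm{Lip}(A_{\xi(j)})\,\mathrm{Lip}(\phi_j)\,d(x,y).
\]
Assumption \ref{ass:H1} gives $\mathrm{Lip}(A_{\xi(j)}) \leq M$ uniformly in $j$, and $\mathrm{Lip}(\phi_j) = 1/2$ for every $j$, yielding $\mathrm{Lip}(q_j) \leq \beta M/2$ with a bound independent of $j$.

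There is no real obstacle: the lemma is a bookkeeping statement turning a double series into a single countable-IFS transfer operator so that the results of \cite{Urb} can later be invoked. The only subtle point worth flagging in the write-up is that one must ensure $B_q$ and $\mathcal{L}_{\beta,\mu}$ are both well defined as operators on $\mathrm{C}(\Omega)$ before asserting their pointwise equality; admissibility and the uniform bound $\|g\|_\infty$ handle this at once.
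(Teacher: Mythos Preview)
Your proposal is correct and follows essentially the same approach as the paper: both arguments amount to recognizing that the double sum over $(N,a)\in\mathbb{N}_0\times\mathcal{A}$ defining $\mathcal{L}_{\beta,\mu}$ can be reindexed as a single sum over $j\in\mathbb{N}_0$ via the division-with-remainder bijection $j\leftrightarrow(a(j),\xi(j))$; the paper simply writes out the first few terms explicitly rather than naming the bijection. Your write-up is in fact slightly more complete, since you supply the short Lipschitz estimate for $q_j$ that the paper's proof leaves unstated.
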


\begin{proof}
   The proof follows easily from a computation.  In order to obtain a representation of the operator $\mathcal{L}_{\beta, \mu}$ as the transfer  operator of an IFS we must find a suitable family of weights. To do that we observe that
$$\mathcal{L}_{\beta, \mu}(f) (x)
:=\sum_{N \in \mathbb{N}_0} e^{\beta \, \mu\,  N }\, \sum_{j \in \mathcal{A}} e^{- \,\beta\, A_N(jx)} f(jx)= $$
$$= e^{\beta \, \mu\,  0 }[e^{- \,\beta\, A_0(0x)} f(0x)+ e^{- \,\beta\, A_0(1x)} f(1x)+ \cdots + e^{- \,\beta\, A_0((r-1)x)} f((r-1)x)] + $$
$$+ e^{\beta \, \mu\,  1 }[e^{- \,\beta\, A_1(0x)} f(0x)+ e^{- \,\beta\, A_1(1x)} f(1x)+ \cdots + e^{- \,\beta\, A_1((r-1)x)} f((r-1)x)] + \cdots =$$
$$= e^{- \,\beta\, (A_0(0x)- \mu\,  0)} f(0x)+ e^{- \,\beta\, (A_0(1x)- \mu\,  0)} f(1x)+ \cdots + e^{- \,\beta\, (A_0((r-1)x)- \mu\,  0)} f((r-1)x) + $$
$$+ e^{- \,\beta\, (A_1(0x)- \mu\,  1)} f(0x)+ e^{- \,\beta\, (A_1(1x)- \mu\,  1)} f(1x)+ \cdots + e^{- \,\beta\, (A_1((r-1)x)- \mu\,  1)} f((r-1)x) + \cdots.$$
In the first line we replace $0x= \phi_{0}(x)$, $1x= \phi_{1}(x)$, ..., $(r-1)x= \phi_{(r-1)}(x)$ and after, in the second line, we replace $0x= \phi_{r}(x)$, $1x= \phi_{r+1}(x)$, ..., $(r-1)x= \phi_{(2r-1)}(x)$, and so on.

From this choice for the coefficients we get:
$$q_{0}(x):= - \,\beta\, (A_0(0x)- \mu\,  0), \; q_{1}(x):= - \,\beta\, (A_0(1x)- \mu\,  0), ...,$$$$ \;q_{r-1}(x):= - \,\beta\, (A_0((r-1)x)- \mu\,  0),$$
$$q_{r}(x):= - \,\beta\, (A_1(0x)- \mu\,  1), \; q_{r+1}(x):= - \,\beta\, (A_1(1x)- \mu\,  1), ...,$$$$ \; q_{2r-1}(x):=- \,\beta\, (A_1((r-1)x)- \mu\,  1),$$
$$q_{2r}(x):= - \,\beta\, (A_2(0x)- \mu\,  2), \; q_{2r+1}(x):= - \,\beta\, (A_2(1x)- \mu\,  2), ...,$$ $$ \; q_{3r-1}(x):=- \,\beta\, (A_2((r-1)x)- \mu\,  2),...$$
which obviously satisfy
$q_{j}(x):= - \,\beta\, (A_{\xi(j)}(\phi_{j}(x))- \xi(j)\, \mu), \; j \in \mathbb{N}_0$, proving our claim.
\end{proof}

Despite the fact that the maps $\phi_j$, $j \in \mathbb{N}$,   repeat themselves periodically, the weights do not. Thus, $\mathcal{R}= (\Omega, \phi_j, q_j)$ is a genuine countable IFS with weights.

To further developments we need to introduce some notation on countable IFSs taken
from~\cite{Urb}.

A word of length $n$ in $I^{\mathbb{N}}$ is an element $w:=(w_{1},
...,w_{n}) $ of $I_{n}:=I^{n}$ and $\sigma(w):=(w_{2}, ...,w_{n}) \in I_{n-1}
$. Given $x \in \Omega$ the iterate $\phi_{w}(x)$ is the point
\begin{equation*}
y := \phi_{w_{1}}(\cdots (\phi_{w_{n}}(x)) \in \Omega.
\end{equation*}

A family of continuous functions $Q:=\{ q_{j}: \Omega \to \mathbb{R}, \; j
\in I \}$ is $\alpha$-H\"older if
\begin{equation*}
V_{\alpha}(Q):=\sup_{n\geq 1} V_{n}(Q) <\infty
\end{equation*}
where
\begin{equation*}
V_{n}(Q):=\sup_{w \in I_{n}} \sup_{x\neq y} |q_{w_{1}}(\phi_{\sigma(w)}(x))
- q_{w_{1}}(\phi_{\sigma(w)}(y))| e^{\alpha (n-1)}.
\end{equation*}
Additionally, if $B_{q}(1) \in C(\Omega)$ then we say that the family $Q$ is
strongly $\alpha$-H\"older or, according to \cite{Urb0}, \emph{summable}.

\begin{lemma}
Suppose that $\Phi=(A_N)_{N \in \mathbb{N}_0}$ is a  family of potentials
satisfying  ${\rm Lip} (A_{N}) \leq M$, for some $M>0$ and
\begin{equation}\label{eq: asd1}
 A_N(x) > \left(\mu + \frac{\log(r)}{\beta}\right)\, N, \; \forall x \in \Omega, \forall N \in \mathbb{N}_0.
\end{equation}

Then, the family $Q$ is $\alpha$-H\"older for $\alpha:= \log (2)$ and
summable, that is $\mathcal{L}_{\beta, \mu}(\mathit{1}) \in C(\Omega)$.
\end{lemma}

\begin{proof}
   Consider $w \in I_{n}$ then
   $$|q_{w_{1}}(\phi_{\sigma(w)}(x)) - q_{w_{1}}(\phi_{\sigma(w)}(y))| \leq \operatorname{Lip}(q_{w_{1}}) \frac{1}{2^{n-1}}\, d(x,y) \leq $$ $$\leq\frac{1}{2^{n-1}} \frac{\beta\, M}{2}\, d(x,y)= \frac{\beta\, M}{2^{n}}\, d(x,y),$$
   because we get from Lemma~\ref{weight_count_family} that $\displaystyle\sup_{j \in \mathbb{N}_{0}} \operatorname{Lip}(q_{j}) \leq \frac{\beta\, M}{2}.$ Thus,
   $$V_{n}(Q) \leq \sup_{w \in I_{n}} \sup_{x\neq y} \frac{\beta\, M}{2^{n}} \, d(x,y)  e^{\alpha (n-1)} \leq $$ $$\leq \beta\, \frac{M}{2} \operatorname{diam}(\Omega) \frac{e^{\alpha (n-1)}}{e^{\log(2) (n-1)}} \leq \beta\, \frac{M}{2} \operatorname{diam}(\Omega),$$
   for $\alpha=\log(2)$.

   To see the second part we recall that from Lemma~\ref{weight_count_family} we get
   $$ \mathcal{L}_{\beta, \mu}({\it 1}) (x)= B_{q}({\it 1})(x)=\sum_{N \in \mathbb{N}_0}\, \sum_{j \in \mathcal{A}} e^{-\,\beta \, (\,A_N(jx) - \mu\,  N \,) }.$$
   Thus $\mathcal{L}_{\beta, \mu}({\it 1})  \in C(\Omega)$ if and only if the {\bf positive series  above} is convergent. The root test claims that it is sufficient to prove that
   $$\limsup_{N \to \infty}  \sqrt[N]{\sum_{j \in \mathcal{A}}e^{-\,\beta \, (\,A_N(jx) - \mu\,  N \,) }} <1.$$
   Recall that for nonnegative real  numbers $t_1,...., t_k$ we have $\sqrt[n]{\sum_{i=1}^{k} t_i} \leq \sum_{i=1}^{k}\sqrt[n]{ t_i}$, thus
   $$\limsup_{N \to \infty}  \sqrt[N]{\sum_{j \in \mathcal{A}}e^{-\,\beta \, (\,A_N(jx) - \mu\,  N \,) }} \leq \limsup_{N \to \infty}\sum_{j \in \mathcal{A}}\sqrt[N]{e^{-\,\beta \, (\,A_N(jx) - \mu\,  N \,) }} \leq$$
   $$\leq \sum_{j \in \mathcal{A}}e^{\limsup_{N \to \infty} \,( -\,\beta ) \left(\,\frac{1}{N}A_N(jx) - \mu \right) } < \sum_{j \in \mathcal{A}}e^{\limsup_{N \to \infty} \, ( -\,\beta ) \,\left([\frac{\log(r)}{\beta} + \mu]\, - \mu \right) }= 1,$$
   because $\displaystyle A_{N}(y) > N\, \left(\frac{\log(r)}{\beta} + \mu\right)$, for any $y \in \Omega$.
\end{proof}

It follows  from the above  that we can apply the results from \cite{Urb} regarding entropy
and topological pressure for IFS.

\begin{lemma} \label{opry}
Suppose that $\Phi=(A_N)_{N \in \mathbb{N}_{0}}$ is a family of potentials
satisfying ${\rm Lip} (A_{N}) \leq M$, for some $M>0$ and
\begin{equation*}
 A_N(x) > \left(\mu + \frac{\log(r)}{\beta}\right)\, N, \; \forall x \in \Omega, \forall N \in \mathbb{N}_0.
\end{equation*}
  The number $\lambda := \mathcal{L}_{\beta, \mu}^*(\nu_q)(\mathit{1})$ is
an eigenvalue of the dual operator $\mathcal{L}_{\beta, \mu}^{*}$ associated
to an eigenmeasure $\nu_{q}$, that is $\mathcal{L}_{\beta, \mu}^{*}(\nu_{q})
= \lambda \nu_{q}$.
\end{lemma}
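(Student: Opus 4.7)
The plan is to obtain the eigenmeasure $\nu_{q}$ as a fixed point of the normalized dual action of $\mathcal{L}_{\beta,\mu}$ on the space of Borel probability measures, in the spirit of the classical Ruelle--Perron--Frobenius construction adapted to the countable IFS setting of \cite{Urb}. Since the preceding lemmas have already verified that the weights $Q=(q_{j})_{j\in\mathbb{N}_{0}}$ are $\alpha$-H\"older with $\alpha=\log 2$ and summable (i.e.\ $\mathcal{L}_{\beta,\mu}(\mathit{1})\in C(\Omega)$), the hypotheses needed to invoke the countable-alphabet machinery of \cite{Urb} are in place, so in principle we may just cite that reference. I will, however, sketch the direct argument for clarity.

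First I would check that $\mathcal{L}_{\beta,\mu}$ is a bounded positive operator on $C(\Omega)$. Positivity is clear from the defining formula \eqref{BRO}, and boundedness follows from summability, since
\begin{equation*}
\|\mathcal{L}_{\beta,\mu}(f)\|_{\infty}\le \|f\|_{\infty}\,\|\mathcal{L}_{\beta,\mu}(\mathit{1})\|_{\infty}<\infty.
\end{equation*}
Consequently the dual operator $\mathcal{L}_{\beta,\mu}^{*}$ acts on the space of finite Borel signed measures and preserves the cone of positive measures. Moreover, since $\mathcal{L}_{\beta,\mu}(\mathit{1})$ is a strictly positive continuous function on the compact space $\Omega$, one has $\mathcal{L}_{\beta,\mu}^{*}(\nu)(\mathit{1})=\int \mathcal{L}_{\beta,\mu}(\mathit{1})\,d\nu>0$ for every probability measure $\nu$, so the normalization is always legitimate.

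Next I would define the map
\begin{equation*}
T:\mathcal{M}_{1}(\Omega)\longrightarrow \mathcal{M}_{1}(\Omega),\qquad T(\nu):=\frac{\mathcal{L}_{\beta,\mu}^{*}(\nu)}{\mathcal{L}_{\beta,\mu}^{*}(\nu)(\mathit{1})}.
\end{equation*}
The set $\mathcal{M}_{1}(\Omega)$ is convex and, by the Banach--Alaoglu theorem, compact in the weak-$*$ topology. Continuity of $T$ in that topology reduces to the continuity of $\nu\mapsto \int g\,\mathcal{L}_{\beta,\mu}(\mathit{1})\,d\nu$ and $\nu\mapsto\int \mathcal{L}_{\beta,\mu}(g)\,d\nu$ for every $g\in C(\Omega)$, which holds because $\mathcal{L}_{\beta,\mu}$ maps $C(\Omega)$ into itself thanks to the H\"older and summability properties already established. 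The Schauder--Tychonoff fixed-point theorem then yields $\nu_{q}\in\mathcal{M}_{1}(\Omega)$ with $T(\nu_{q})=\nu_{q}$, which is exactly the eigenmeasure relation $\mathcal{L}_{\beta,\mu}^{*}(\nu_{q})=\lambda\,\nu_{q}$ with $\lambda=\mathcal{L}_{\beta,\mu}^{*}(\nu_{q})(\mathit{1})$.

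The main technical obstacle is verifying that $\mathcal{L}_{\beta,\mu}$ genuinely preserves $C(\Omega)$, i.e.\ that the countable series defining $\mathcal{L}_{\beta,\mu}(g)$ converges not only pointwise but uniformly to a continuous function for every $g\in C(\Omega)$. This is where the strong superlinearity Assumption~\ref{ass:H3} with $\mathcal{K}'=\frac{\log r}{\beta}+\mu$ (and hence the explicit rate estimate established in the summability proof) is essential: it produces a geometric-type tail bound on $\sum_{N\ge N_{0}}\sum_{j\in\mathcal{A}}e^{-\beta(A_{N}(jx)-\mu N)}$ uniform in $x$, so that the partial sums converge uniformly. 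Once this is settled, the fixed-point argument and the identification of the eigenvalue $\lambda$ are routine, and the remainder of the conclusions of \cite[Section 2--3]{Urb} transfers verbatim.
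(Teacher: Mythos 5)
Your argument is correct and is essentially the paper's own proof: the paper also identifies $\mathcal{L}_{\beta,\mu}=B_{q}$ via the earlier lemma, uses summability of the weights to see that the dual operator is well defined, and obtains $\nu_{q}$ as a Schauder--Tychonoff fixed point of the normalized dual map $\nu\mapsto \mathcal{L}_{\beta,\mu}^{*}(\nu)/\mathcal{L}_{\beta,\mu}^{*}(\nu)(\mathit{1})$, identifying $\lambda=\mathcal{L}_{\beta,\mu}^{*}(\nu_{q})(\mathit{1})$. You merely spell out the details (weak-$*$ compactness of $\mathcal{M}_{1}(\Omega)$, continuity of the normalized map, positivity of $\mathcal{L}_{\beta,\mu}(\mathit{1})$) that the paper delegates to \cite{Urb}.
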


\begin{proof}
   As $\mathcal{L}_{\beta, \mu}=B_{q}$ from Lemma~\ref{weight_count_family}, the result goes as follows. First we notice that $B_{q}^{*}: C(\Omega)^{*} \to C(\Omega)^{*}$ is well defined because the family $Q$ is summable ($B_{q}^{*}(\nu)({\it 1}) < \infty$) and the operator  $\nu \to \frac{B_{q}^{*}(\nu)}{B_{q}^{*}(\nu)({\it 1})}$ has a fixed point by Schauder-Tychonoff's Theorem (see \cite{Urb} for details). Lets say that the probability measure $\nu_{q}$ is that fixed point, then $\frac{B_{q}^{*}(\nu_{q})}{B_{q}^{*}(\nu_{q})({\it 1})}= \nu_{q}$ or equivalently $B_{q}^{*}(\nu_{q}) = \lambda \nu_{q}$, where $\lambda:=B_{q}^{*}(\nu_{q})({\it 1}) \in \mathbb{R}$.
\end{proof}

\begin{remark}
As we pointed out before a contractive countable IFS has an attractor $K
\subseteq \Omega$ satisfying $K=\bigcup_{j \in I}(K)$ which is not
necessarily compact, unless $I$ is a finite set. However in our case $%
\phi_{j}(x)=(j \mod r)x,\; j \in \mathbb{N}_0$ is actually a finite family
and $K =\Omega$ which is compact, by construction. Thus Lemma 2.5 from
\cite{Urb}, claiming that $\nu_{q}(K)=1$, says only that $\nu_{q}$ has full
support in $\Omega$.
\end{remark}

Then, we define the partition function associated with the family $Q$,
previously defined
\begin{equation*}
Z_{n}(Q):=\sum_{w \in I_{n}}\|e^{\sum_{j=1}^{n} q_{w_{j}}(\phi_{\sigma^{j}(w)})
}\|_{0}=\sum_{w \in I_{n}} e^{\sup_{x \in \Omega}\sum_{j=1}^{n}
q_{w_{j}}(\phi_{\sigma^{j}(w)}(x))}.
\end{equation*}

Since the function, $\log(Z_{n}(Q))$ is subadditive the topological pressure
of a $\alpha$-H\"{o}lder summable family $Q$ can be defined analogously to
the classical theory
\begin{equation}  \label{ainf}
P(Q):=\lim_{n\to \infty} \frac{1}{n} \log(Z_{n}(Q))=\inf_{n \geq 1} \frac{1}{%
n} \log(Z_{n}(Q)).
\end{equation}

A useful result from \cite{Urb} is the following one.

\begin{proposition}[\protect\cite{Urb}, Proposition 2.3]
The function $Q \to P(Q)$ is lower semicontinuous on the space of all $%
\alpha $-H\"{o}lder summable families w.r.t. the topology of the uniform
convergence.
\end{proposition}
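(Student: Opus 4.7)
The plan is to realize $P(Q)$ as a pointwise supremum of continuous functions of $Q$, since any such supremum is automatically lower semicontinuous. For each finite subset $F \subset I = \mathbb{N}_0$, introduce the restricted partition function
\[
Z_n^F(Q) := \sum_{w \in F^n} \|e^{\sum_{j=1}^n q_{w_j}(\phi_{\sigma^j(w)})}\|_0
\]
and the associated restricted pressure $P_F(Q) := \lim_n \tfrac{1}{n}\log Z_n^F(Q)$, which is well defined by the same Fekete-type subadditivity argument that produces $P(Q)$ in \eqref{ainf} (the finite sub-IFS obtained by restricting the alphabet to $F$ inherits subadditivity of $\log Z_n^F$).

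The first step is to verify that $Q \mapsto P_F(Q)$ is continuous on the space of $\alpha$-H\"older summable families with the uniform topology. Because $F$ is finite, $Z_n^F(Q)$ is a finite positive sum of exponentials of supremum-norms; the elementary estimate
\[
\Bigl|\tfrac{1}{n}\log Z_n^F(Q) - \tfrac{1}{n}\log Z_n^F(Q')\Bigr| \leq \sup_{j \in F} \|q_j - q'_j\|_0
\]
holds because each exponent differs by at most $n \sup_{j \in F}\|q_j - q'_j\|_0$, so the total sum is pinched between $Z_n^F(Q) e^{\pm n \sup \|q_j - q'_j\|_0}$. Passing to the limit $n \to \infty$ shows $P_F$ is $1$-Lipschitz in the uniform norm.

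The second and main step is the approximation formula $P(Q) = \sup_F P_F(Q)$, with $F$ ranging over finite subsets of $I$. The inequality $P_F \leq P$ is immediate from $Z_n^F \leq Z_n$. For the reverse, I would fix $\epsilon > 0$, use summability to exhaust all but an exponentially small portion of each $Z_n(Q)$ by sums over a finite $F$, and then exploit the submultiplicativity $Z_{m+n}(Q) \leq Z_m(Q)\,Z_n(Q)$ (equivalently, the subadditivity of $\log Z_n$ stated just before \eqref{ainf}) to make the estimate survive uniformly in $n$. The hard part will be upgrading the easy termwise tail bound (for fixed $n$) into a bound that survives dividing by $n$ and passing to the limit; this is exactly where the $\alpha$-H\"older summability of the family is used, and it is also the place where one must be careful that truncating the alphabet does not lose a term that grows exponentially with $n$.

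Once both steps are in place, $P$ is a pointwise supremum of continuous functions indexed by the finite subsets of $I$, and is therefore lower semicontinuous on the space of $\alpha$-H\"older summable families equipped with the topology of uniform convergence. I should note that the naive alternative of working directly with $P(Q) = \inf_n \tfrac{1}{n} \log Z_n(Q)$ and the easily verified lower semicontinuity of each $Z_n$ (it is a countable sum of continuous positive terms, hence the supremum of its finite partial sums) does not yield the result, because the infimum of a sequence of lower semicontinuous functions is not in general lower semicontinuous; the detour through finite subsystems is therefore essential.
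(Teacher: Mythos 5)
The paper states this proposition as a quoted result from \cite{Urb} and gives no proof of its own, so the comparison is with the source's argument; your architecture --- writing $P(Q)=\sup_{F}P_{F}(Q)$ over finite subalphabets $F\subset I$ and checking that each finite-subsystem pressure $P_F$ is $1$-Lipschitz for the uniform norm on the coordinates in $F$ --- is exactly that argument, and your first step is carried out correctly. The genuine gap is that the heart of the matter, the approximation formula $P(Q)=\sup_F P_F(Q)$, is only announced, and the sketch you give does not contain the mechanism that makes it work. Submultiplicativity $Z_{m+n}(Q)\le Z_m(Q)Z_n(Q)$ points the wrong way: to turn the termwise tail bound into a bound on the exponential growth rate you need an upper bound of the form $Z_m^F(Q)\le T\,e^{mP_F(Q)}$, and that requires near \emph{super}multiplicativity of the partition functions, i.e.\ the bounded-distortion estimate $\sup_x e^{S_wq}(x)\le T\inf_x e^{S_wq}(x)$ for all words $w$, which is where the $\alpha$-H\"older condition $V_\alpha(Q)<\infty$ (together with the uniform contraction of the $\phi_j$) enters --- not merely in controlling tails. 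With that constant $T$ in hand, the standard argument decomposes each $w\in I^n$ according to the set of the $k$ positions carrying letters outside $F$: submultiplicativity of sup-norms across the resulting blocks, the tail bound $\sum_{i\notin F}\|e^{q_i}\|_0\le\epsilon$, and $Z_m^F(Q)\le T e^{mP_F(Q)}$ give $Z_n(Q)\le\sum_{k=0}^n\binom{n}{k}\epsilon^k T^{k+1}e^{(n-k)P_F(Q)}\le T\bigl(e^{P_F(Q)}+T\epsilon\bigr)^n$, hence $P(Q)\le\log\bigl(e^{\sup_F P_F(Q)}+T\epsilon\bigr)$, and letting $\epsilon\to 0$ yields the missing inequality. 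Your proposal stops exactly where this estimate is needed, and you say so yourself; as written it is a correct plan rather than a proof.

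A secondary point worth settling is the meaning of ``uniform convergence'' on the space of families. If it meant the metric $\sup_{i\in I}\|q_i-q_i'\|_0$, then your own Lipschitz computation applied with $F=I$ would give full (Lipschitz) continuity of $P$ on the summable families, making the lower-semicontinuity statement trivially weak; the reading consistent with the statement is that only finitely many coordinates are controlled at a time (uniform convergence in $x$ for each index $i$), and under that topology full continuity genuinely fails, which is why the detour through finite subsystems is not just convenient but necessary. Your finite-$F$ continuity estimate is the right input in that topology precisely because $F$ is finite, so this does not damage your step one, but the proof should say which topology is meant.
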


We also have the fundamental characterization of the eigenvalue $\lambda=(%
\mathcal{L}_{\beta, \mu})^{*}(\nu_{q})(\mathit{1})$ in terms of the pressure:

\begin{lemma}
Suppose that $\Phi=(A_N)_{N \in \mathbb{N}_0}$  is a family of potentials
satisfying ${\rm Lip} (A_{N}) \leq M$, for some $M>0$ and
\begin{equation*}
 A_N(x) > \left(\mu + \frac{\log(r)}{\beta}\right)\, N, \; \forall x \in \Omega, \forall N \in \mathbb{N}_0.
\end{equation*}
 The eigenvalue $\lambda$ of the dual
operator $(\mathcal{L}_{\beta, \mu})^{*}$ is given by
\begin{equation*}
\lambda=e^{P(Q)}.
\end{equation*}
\end{lemma}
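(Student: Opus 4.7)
The plan is to extract $\lambda$ from the eigenmeasure equation, express $\lambda^n$ as an integral of the iterated transfer operator applied to $\mathit{1}$, and then compare this to the partition function $Z_n(Q)$ using a bounded-distortion estimate coming from the $\alpha$-Hölder summable property already established for the family $Q$.

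First, from $\mathcal{L}^{*}_{\beta,\mu}(\nu_q)=\lambda \nu_q$ in Lemma~\ref{opry}, I would iterate to obtain $(\mathcal{L}^{*}_{\beta,\mu})^{n}(\nu_q)=\lambda^{n}\nu_q$ and pair both sides with $\mathit{1}$, yielding
\[
\lambda^{n} \;=\; \lambda^{n}\nu_q(\mathit{1}) \;=\; \int_\Omega B_q^{n}(\mathit{1})(x)\, d\nu_q(x).
\]
Expanding the $n$-th iterate of $B_q=\mathcal{L}_{\beta,\mu}$ along words, one has
\[
B_q^{n}(\mathit{1})(x)=\sum_{w\in I_n} \exp\!\Bigl(\,\sum_{j=1}^{n} q_{w_j}\bigl(\phi_{\sigma^{j}(w)}(x)\bigr)\Bigr),
\]
with the convention $\phi_{\sigma^{n}(w)}(x)=x$, and, by definition, $Z_n(Q)=\sum_{w\in I_n} \exp\!\bigl(\sup_{x}\sum_{j=1}^{n} q_{w_j}(\phi_{\sigma^{j}(w)}(x))\bigr)$.

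The second step is the bounded-distortion estimate: since $Q$ is $\alpha$-Hölder summable with $\alpha=\log 2$, the previous lemma gives $V_\alpha(Q)<\infty$. Because $\phi_{\sigma^{j}(w)}$ contracts distances by $2^{-(n-j)}$, the oscillation of the Birkhoff sum $S_n^q(w,x):=\sum_{j=1}^{n} q_{w_j}(\phi_{\sigma^{j}(w)}(x))$ in $x$ is bounded, uniformly in $w\in I_n$ and $n$, by the convergent geometric series $\sum_{k\geq 0} V_\alpha(Q)\,2^{-k}\cdot\mathrm{diam}(\Omega)^{\alpha}$ (which is essentially the standard computation in \cite{Urb}). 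Call this uniform bound $D<\infty$. Then for every $w\in I_n$ and every $x\in\Omega$,
\[
e^{-D}\,e^{\sup_x S_n^q(w,x)}\;\leq\; e^{S_n^q(w,x)}\;\leq\; e^{\sup_x S_n^q(w,x)}.
\]
Summing over $w$ yields $e^{-D}\,Z_n(Q)\leq B_q^{n}(\mathit{1})(x)\leq Z_n(Q)$ for every $x\in\Omega$.

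Integrating this double inequality against $\nu_q$ and using $\lambda^{n}=\int B_q^{n}(\mathit{1})\,d\nu_q$, I obtain
\[
e^{-D}\,Z_n(Q)\;\leq\;\lambda^{n}\;\leq\; Z_n(Q).
\]
Taking $\tfrac{1}{n}\log$ and sending $n\to\infty$, the constant $D$ is absorbed and by the definition~\eqref{ainf} one concludes $\log\lambda=\lim_{n}\tfrac{1}{n}\log Z_n(Q)=P(Q)$, i.e.\ $\lambda=e^{P(Q)}$. The main technical point will be step two, the distortion bound: one must verify that the telescoping along compositions $\phi_{\sigma^{j}(w)}$ of contractions with Lipschitz constant $1/2$ gives a geometric decay compatible with the $\alpha=\log 2$ Hölder norm $V_\alpha(Q)$; this is a standard but careful bookkeeping, and it is exactly where the summability/regularity hypotheses of \cite{Urb} enter.
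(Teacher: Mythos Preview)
Your proof is correct. The eigenmeasure identity $\lambda^{n}=\int B_q^{n}(\mathit{1})\,d\nu_q$, the bounded-distortion bound $e^{-D}Z_n(Q)\le B_q^{n}(\mathit{1})\le Z_n(Q)$ coming from the $\alpha$-H\"older summability of $Q$, and the passage to the limit are all sound; the telescoping of oscillations along $\phi_{\sigma^{j}(w)}$ indeed gives a uniformly summable geometric series because $V_{n-j+1}(Q)\le V_\alpha(Q)$ controls each term by $V_\alpha(Q)\,e^{-\alpha(n-j)}=V_\alpha(Q)\,2^{-(n-j)}$.

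The paper, however, does not carry out this argument: it simply invokes \cite{Urb}, Lemma~2.4, together with the identification $B_q=\mathcal{L}_{\beta,\mu}$ from Lemma~\ref{weight_count_family} and the existence of $\nu_q$ from Lemma~\ref{opry}. What you have written is essentially an inline proof of that cited lemma, specialised to the present contractive IFS. The advantage of your route is self-containment (no black-box citation); the paper's route keeps the exposition short by delegating the distortion/partition-function comparison to \cite{Urb}. Mathematically the two coincide.
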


\begin{proof}
  From \cite{Urb}, Lemma 2.4, the eigenvalue $\lambda$ of the dual operator $B_{q}^{*}$ is given by  $\lambda=e^{P(Q)}.$ As $B_{q} =\mathcal{L}_{\beta, \mu}$, from Lemma~\ref{opry}, the result follows.

  \end{proof}

\section{Appendix - A brief account on Thermodynamics of ideal gases}

\label{Tgas}

This section does not primarily have a dynamical system content; our goal is
to present a brief description of concepts and phenomena occurring in the
physical world, aimed at an audience of readers who are mathematically
oriented. Here, we are interested in physical systems which are in
thermodynamical equilibrium. For simplicity, we mainly consider the case of
ideal gases (i.e., systems of non-interacting point-like particles).
Thermodynamics is the branch of physics that organizes systematically the
empirical laws referring to the thermal behavior of the macroscopic world.
It is one of our intentions to explain the meaning of this statement. For
the mathematical reader who tries to understand the content of some texts in
Physics, an initial difficulty is the jargon used there; all this will be
translated here into a more formal context. At the end of this Appendix, we
will mention a possible interpretation of topological pressure as related to
gas pressure (see Remark \ref{rel}).

Our goal is to draw a parallel between concepts of thermodynamics of gases
with similar ones in the mathematical theory of Thermodynamic Formalism in
the sense of \cite{MR1085356}. We believe that this can provide an
enrichment of the class of questions that can be raised and proposed in
Thermodynamic Formalism.

\subsection{The case of a definite number of particles}

Consider a classical gas with $N$ particles (initially $N$ is fixed) at
\textit{temperature} $T$ in a region with \textit{volume} $V$.  We
assume that the temperature  $T > 0$ is related to $\beta$ through equation \eqref{Boltzmann-constant}. Denote by $p$ the
\textit{gas pressure} of the system, by $S$ the \textit{entropy}, and by $%
\mu $ the \textit{chemical potential}. The \textit{total energy} $U$ is a
function $U = U (S, V, N)$, of the variables $S,V,N$. It is important to
point out that we are assuming that the macroscopic variables $T,V,N,p$ can
be measured. In fact, the values of these variables $S,V,N$, are not so
important in themselves, but their variations $\delta S,\delta V,\delta N$
are.

For the benefit of the reader, we will briefly describe below some basic
properties of the thermodynamics of gases (intertwined with Statistical
Mechanics). For more details see \cite{MR2777415}, \cite{Salinas}, \cite{Sch}%
, \cite{Cale}, \cite{Zu} or Section 6 in \cite{LR}. In this way, we think
that some of the future (and also past) definitions that we will present
here will look natural.


The fundamental relations concerning the above-described variables can be
found in (3.6) on page 42 on \cite{Salinas}:

\begin{align*}
T &= \frac{\partial U}{\partial S}; \\
p &=- \frac{\partial U}{\partial V}; \\
\mu &= \frac{\partial U}{\partial N} \;.
\end{align*}

Another fundamental relation in thermal physics (of ideal gases) is the
following

\begin{equation}
p\,V=k_{B}\,N\,T\;.  \label{fun1}
\end{equation}%
Recall that $k_{B}$ denotes the Boltzmann constant. The above equation is
called the equation of state of the ideal gas. Other kinds of systems (like
interacting gases, liquids, solids, etc.) have their own equations of state.

The above can be understood in several different ways from a physical point
of view. For instance, consider a piston at temperature $T$, where the
piston chamber has volume $V$ and contains $N$ particles. When the volume $V$ and the temperature $T$ are fixed, the
pressure $p$ becomes a linear function of the number of particles $N$. On
the other hand, when the volume $V$ and the number of particles $N$ are
fixed, it follows that the pressure $p$ is linear with respect to the
temperature $T$. Later, we will be interested in the case where the number $N
$ of particles is an unknown value ranging on $\mathbb{N}_{0}$ (the grand
canonical case).

We point out that the relation \eqref{fun1} is valid under the quasi-static
regime (this means that the thermodynamic processes we consider are such
that the changes are slow enough for the system to remain in equilibrium).

A variable is called \textit{extensive (intensive)}, if it is proportional
(not proportional) to the volume $V$, like energy and number of
particles (energy and particle densities). More precisely,
it is important to recall that the definitions of extensive and intensive
variables make sense only in the thermodynamic limit, i.e., extensive
(intensive) variables are proportional (not proportional) to the volume $V
$, when $V$ tends to infinity. For instance, if the ratio $N/V$ tends
to some constant $\varrho $, then the particle density, which is
precisely $\varrho $, is an intensive quantity, whereas the number
of particles $N$ is the associated extensive quantity. In a similar way as
for intensive quantities, one also defines so-called molar quantities, which
are, by definition, quantities that are proportional to the number of
particles $N$. For instance, the total energy per particle is a molar
quantity if the ratio $E/N$ has a limit, as $N\rightarrow \infty $.
Frequently, the ratio $N/V$ is set to be constant, that is, the particle
density is fixed. In this particular situation, molar and intensive
quantities are, of course, equivalent notions.

We point out that the mathematical formalism for Gas Thermodynamics and
Statistical Mechanics is not exactly the same, but certain general
principles are common in both theories. Particles of a gas are displayed in
a random way, as well as spins on a lattice. The randomness of the particles
of gas (for instance each position and velocity) should be described by a
probability. Note that a probability (a law that is assigned to Borel sets
values in $[0,1]$) is not a physical entity; it is a tool to predict (or to
explain) - values that are measured in the Physics of the real world - in
circumstances of lack of complete knowledge (for a discussion relating
Physics to Information Theory see for instance \cite{Cat} and \cite{Bricm}).

For practical purposes (aiming the reader familiar with probability) one can
identify \textit{microstates} with points in $\{1,2,..,d\}^\mathbb{N}$,
\textit{ensembles} with probabilities in $\{1,2,..,d\}^\mathbb{N}$, and
finally \textit{macrostates} with continuous functions $A: \{1,2,..,d\}^%
\mathbb{N} \to \mathbb{R}$. Given a probability (an ensemble) $\rho$ in $%
\{1,2,..,d\}^\mathbb{N}$, the value
\begin{equation*}
<A>_\rho=\int A d \rho,
\end{equation*}
is considered a \textit{macroscopical quantity.} Macroscopic variables are
easier to measure.

What are the probabilities (ensembles) $\rho$ which are relevant when an
isolated system is governed by a certain Hamiltonian (a macrostate) $A$ and
it is at temperature $T=\frac{1}{k_B\,\beta}$? Typically one is interested
in a minimization (or maximization) problem-related to free energy - where
an illustration of this problem is given in \eqref{preu} - (or the MaxEnt
method, where an exemplification is given in \eqref{maxu}), and this
requires the addition of the concept of entropy (to be introduced soon in
the comments to the Second Postulate of Thermodynamics). Equilibrium in
isolated systems occur according to the principle of entropy maximization
(the Second Law of Thermodynamics).

Related to the above-mentioned variational problems, it is worthwhile to
consider the concept of canonical distribution. Given $\beta>0$, a
macrostate $A$, and an \textit{a priori} measure $\rho$ on $%
\{1,2,..,d\}^\mathbb{N}$, the \textit{microcanonical partition function} is
\begin{equation}  \label{hh121}
Z (\beta) =\int e^{- \beta A(x)} d \rho (x)<\infty,
\end{equation}
and the \textit{canonical distribution} $\mu_{A,\rho, \beta}$ is given by
the law
\begin{equation}  \label{hh1}
B \to \mu_{A,\rho, \beta}(B)= \frac{\int_B e^{- \beta A(x)} d \rho (x)}{
Z(\beta)}.
\end{equation}

We call $\mu_{A,\rho, \beta}$ the \textit{microcanonical distribution} (or
else, \textit{microcanonical ensemble}) for $A, \beta$ and the \textit{a
priori} measure $\rho$.

The importance of this class of probabilities is due to the fact that they
are the solutions to certain kinds of variational problems (see Remark \ref%
{hhy}) which are related to the Second Law of Thermodynamics. Therefore, $%
\mu_{A,\rho, \beta}$ describes an equilibrium ensemble (state).

When $\rho$ is fixed, all these probabilities $\mu_{A,\rho, \beta}$ are
absolutely continuous with respect to each other.

When $\rho$ is the counting measure on $\{1,2,...,d\}$ and $A:\{1,2,...,d\}
\to \mathbb{R}$, the \textit{microcanonical partition function} is
\begin{equation*}
Z (\beta) = \sum_j e^{- \beta A(j)},
\end{equation*}
and the \textit{canonical distribution} $\mu=\mu_{A, \beta}$ is the
probability such that
\begin{equation}  \label{hh1}
\int f d \mu_{A,\beta}= \frac{\sum_j f(j) e^{- \beta A(j)} }{ Z(\beta)}.
\end{equation}

The $\mu_{A, \beta}$-probability of $j_0$ is
\begin{equation}  \label{hu1}
\frac{ e^{- \beta A(j_0)} }{ \sum_j e^{- \beta A(j)}}.
\end{equation}

\smallskip

All the above does not have a \textit{dynamical} content. Entering in a
dynamically context we can say that an \textit{equilibrium ensemble} can be
seen as a shift-invariant probability on $\{1,2,..,d\}^\mathbb{N}$. In
Statistical Mechanics the dynamics of the shift describe translation on the
one-dimensional lattice.

Note that ergodic probabilities are singular with respect to each other;
therefore, the dynamical point of view presents some conceptual differences
when compared with the above.

\bigskip

Energy, volume and the number of particles $N$ are called the \textit{%
macroscopic extensive parameters}. The \textit{microscopical variables}
refer to probabilities (states), and the main issue (see Remark \ref{dance})
is to establish a connection between them and the visible variables of the
macroscopic world (that is, with thermodynamics).

We observe, from a historical perspective, that initially the thermodynamics
of gases was developed without the knowledge that gas was constituted by a
large number of particles.

Now we will describe the \textit{postulates of equilibrium thermodynamics}
in a simplified way (for more details see pages 40-41 in \cite{Salinas} or
Section 14.4 in \cite{Bai}). These laws describe the interplay between
microscopical and macroscopical variables. They are rules of Nature, and
their validity derives from the fact that when assuming them, the
consequences that are inferred are in accordance with the observed reality.

\smallskip

\textbf{First Postulate:} The macroscopic state is completely characterized
by the internal energy $U$, the volume $V$, and the number of particles $N$.
The total energy of an isolated system (for which energy and matter transfer
through the system boundary are not possible) is conserved. \smallskip

\textit{Comment: In Thermodynamic Formalism, the potential $A=-\,H$, where $%
H $ plays the role of the Hamiltonian and corresponds to the concept of
internal energy. From \cite{Cale}: ``Energy $U$ is transferred between
systems in two forms: energy in the form of work $W$, and disordered energy
in the form of heat $Q$... Note that heat $Q$ and work $W$ are not
themselves functions of the state of a system; they are measures of the
amount of energy $\delta U=\delta W + \delta Q$ that is transferred between
systems in different forms.'' }

\smallskip

Relations between heat $Q$, work $W$ and energy $U$ in a dynamical setting
are described in Section 7 on \cite{LR}.

\textbf{Second Postulate:} There is a function of the extensive parameters $%
U,V,N$ called entropy, denoted by $S = S(U,V,N)$, that is defined for all
states of equilibrium. If we remove an internal constraint, in the new state
of equilibrium the extensive parameters of the system assume a set of values
that maximize the entropy. The entropy, as a function of the extensive
parameters, is a fundamental equation of the system. It contains all the
thermodynamic information about this system. For isolated systems, entropy
never decreases.

\smallskip

\textit{Comment: When considering probabilities $p=(p_1,p_2,...,p_d)$ on the
set $\{1,2,...,d\}$, the entropy $h(p) =-\sum_{j=1}^d \log p_j\, p_j \geq 0.$
The above postulate is one of the forms of the second law of thermodynamics
(see \cite{Sch} or Section 6 in \cite{LR} either), a topic which is
discussed in the dynamical setting in Sections 3 and 4 in \cite{LR}. This is
related to the principle of maximization of entropy when the mean energy (or
other variables) is fixed (see \eqref{maxu} for a particular case).
Expression (87) in Section 7 in \cite{LR} illustrates the MaxEnt principle
in Thermodynamic Formalism. Entropy plays a fundamental role in the search
for equilibrium via the variational problem associated with the Second Law.
This makes the theoretical model in consonance with what is physically
observed. }

\textit{The probability with maximal entropy can be seen as the one that
contains the maximum amount of uncertainty, or, else, contains the minimum
amount of information. }

\textit{Entropy $S$, temperature $T$, and internal energy $U$ are related
via }

\textit{%
\begin{equation}  \label{relTH}
\frac{d S}{d U} = \frac{1}{T}.
\end{equation}
}

\textit{The corresponding expression in Thermodynamic Formalism is
Proposition 42 in \cite{LR}. }

\textit{Another related version of the Second Law can be described in the
following way according to \cite{Cale}: }

\textit{``Time does not occur as a variable in thermodynamic equations...
However, we are crucially interested in the direction of time, in the sense
of the distinction between the past and the future. The second law of
thermodynamics says that other variables being held constant heat always
flow from an object of higher temperature to an object of lower
temperature, and never the other way around.'' }

\textit{There is a discussion in the physics community if the  Boltzmann's entropy and the Gibbs' entropy are in fact the same (see the interesting discussion of the subject presented in the recent book \cite{Bricm}).}

\textit{The meaning of equilibrium can be tricky: in the case of glass,
recently, researchers contradicted the flowing glass window claim, by
determining that the glass in medieval windows only succumbs to gravity,
after very long geological time scales. Glass can be seen as in equilibrium,
or not, depending on the scale of time (see \cite{Bricm} for more details).
}

\smallskip

An expression of the relation of temperature with the variation of entropy
in Thermodynamic Formalism is described in Proposition 42 in \cite{LR}.

\smallskip

\textbf{Third Postulate:} The entropy of a composite system is additive over
each one of the individual components; the so-called extensive property of
entropy. Entropy is a continuous and monotonically increasing function of
energy.

\smallskip

\textit{Comment: From a dynamical point of view, the notion of
entropy in the above postulate is aimed at a system with discrete
time shift-invariance (stationary in $\mathbb{N}$)). However, in
Statistical Mechanics, this shift represents rather a shift in space,
typically the shift of spins in a chain. Thus, physically, entropy
usually refers to translation invariant states. Hence, we can also talk
about the entropy of states out of equilibrium, which, in particular, may
depend on time $t\in \mathbb{R}$. By contrast, usually, the
Shannon-Kolmogorov entropy is defined for invariant probabilities
(stationary in $\mathbb{N}$) and is a concept independent on time $t\in
\mathbb{R}$. The claim about additivity over a composite system is in the
sense that the entropy associated with a system described by two independent
systems is the sum of the entropy of each component; and this is true
for such kind of entropy.} The so-called non-extensive point of view
of Statistical Mechanics considers other concepts of entropy that are not
additive. \smallskip

\textbf{Fourth Postulate:} (Nernst law) The entropy vanishes when there is
only one equilibrium state at the absolute zero of temperature.

\textit{Comment: this postulate corresponds in some sense to thermodynamic
formalism to the property that for a generic H\"older potential $A$, the
ground state (a maximizing probability) is realized by a unique probability
with support in a periodic orbit, which, of course, has zero entropy (see
\cite{Con}, \cite{CLT} and \cite{LM3}). The Nernst Law is sometimes called
the Third Law.}

\medskip

After the above, we believe it is appropriate now to briefly describe the
point of view of the MaxEnt Method.

\begin{remark}
\label{hhy} Given $A:\{1,2,...,d\}\to \mathbb{R}$, and $\alpha \in \mathbb{R}
$, the $(A,\alpha)$-MaxEnt solution is the the vector of probability $\bar{p}%
= (\bar{p}_1,\bar{p}_2,...,\bar{p}_d)$ maximizing

\begin{equation*}
\max_{p=(p_1,p_2,...,p_d)} \{-\sum_{j=1}^d \log p_j\, p_j \,\,| \,\,
\sum_{j=1}^d p_j A(j) \,=\,\alpha\}=
\end{equation*}
\begin{equation}  \label{maxu}
\max_{p=(p_1,p_2,...,p_d)} \{h(p) \,\,| \,\, \sum_{j=1}^d p_j A(j)
\,=\,\alpha\} .
\end{equation}

Via Legendre transform one can show that there exists $\beta \in \mathbb{R}$%
, such that, $\bar{p}= (\bar{p}_1,\bar{p}_2,...,\bar{p}_d)$ also maximizes
the variational problem

\begin{equation*}
\max_{p=(p_1,p_2,...,p_d)} \{-\sum_{j=1}^d \log p_j\, p_j + \beta\,
\sum_{j=1}^d p_j A(j) \}=
\end{equation*}
\begin{equation}  \label{preu0}
\max_{p=(p_1,p_2,...,p_d)} \{h(p) + \beta\, \sum_{j=1}^d p_j A(j) \}..
\end{equation}

The above is equivalent to considering the variational problem (minimizing
\textit{free energy})

\begin{equation*}
\min_{p=(p_1,p_2,...,p_d)} \{\frac{1}{ \beta} \sum_{j=1}^d \, p_j \,\log
p_j\, +\, \, \sum_{j=1}^d p_j A(j) \}=
\end{equation*}
\begin{equation}  \label{preu}
\min_{p=(p_1,p_2,...,p_d)} \{ \, \sum_{j=1}^d p_j A(j)\,-\,\frac{1}{ \beta}
h(p)\, \} .
\end{equation}

Taking $\rho$ as the counting measure in $\{1,2,...,d\} $, one can show (see
for instance Proposition 7.5 in \cite{Bricm}) that the $(A,\alpha)$-MaxEnt
solution probability $\mu_{A,\alpha}$ in $\{1,2,...,d\} $ (the $\bar{p}$
maximal solution of \eqref{maxu}) can be written on the \textit{canonical
distribution} form
\begin{equation}  \label{uite}
B \subset \{1,2,...,d\} \to \mu_{A,\alpha}(B)= \frac{\int_B e^{- \beta A(x)}
d \rho (x)}{ \int_B e^{- \beta A(x)} d \rho (x)}= \frac{\sum_{j\in B} e^{-
\beta A(j)} p_j}{ \sum_{j=1}^d e^{- \beta A(j)} p_j}.
\end{equation}
For dynamical counterparts of the above see \cite{LR}, \cite{Lal2} and \cite%
{CL}.
\end{remark}

The postulates of equilibrium thermodynamics are an issue subject to
controversy. Indeed, we quote G. Gour in \cite{Gour}.

\smallskip

``Thermodynamics is one of the most prevailing theories in physics with vast
applications spreading from its early days focused on steam engines to modern
applications in biochemistry, nanotechnology, and black hole physics, just
to name a few. Despite the success of this field, the foundations of
thermodynamics remain controversial even today. Not only is there persistent
confusion over the relation between the macroscopic and microscopic laws, in
particular their reversibility and time-symmetry, there is not even a
consensus on how best to formulate the second law. Indeed, as the Nobel
laureate Percy Bridgman remarked in 1941 ``there are almost as many
formulations of the Second Law as there have been discussions of it'' and
the situation hasn't improved much since then.''

\begin{remark}
\label{dance} The natural variational problem in thermodynamics corresponds
to find the state (ensemble) $\rho_U$ minimizing the \textit{Helmholtz free
energy} (see (3.51) page 51 in \cite{Salinas})
\begin{equation}  \label{opseli}
F(T,V,N) := U - T \, S \;.
\end{equation}

The equilibrium probability (ensemble) $\rho_U$ for the Hamiltonian $U$ at
temperature $T$ is the one minimizing the integral
\begin{equation}  \label{opseli1}
\int ( U - T \, S)\, d \rho.
\end{equation}

For an illustration of this kind of variational problem see \eqref{preu}.

For $T $ fixed (or, equivalently for $\beta$ fixed), this corresponds to
find $\rho$ maximizing the integral of
\begin{equation}  \label{ops}
- \, \frac{F}{T} = - k_B \, \beta \,F= - k_B \, \beta U + S \;.
\end{equation}

For an illustration of this kind of variational problem see \eqref{preu0}.
\end{remark}

The rule of Nature determining a minimization of the expression %
\eqref{opseli1} can be seen as the connection of the microscopical variables
with the macroscopic variables. Actually, when $U$ is the Hamiltonian, to
maximize expression \eqref{ops} corresponds in Thermodynamic Formalism to
maximize topological pressure for the potential $- k_B \, \beta U$ among $%
\sigma$-invariant probabilities.

From the physical point of view, it is natural to introduce the chemical
potential $\mu $ and the macrostate variable $N$, which describes the number
of particles. The number of particles $N$ can range in the set of natural
numbers $\mathbb{N}$.

The \textit{grand-canonical thermodynamic potential} (see (3.55) page 52 in
\cite{Salinas}), is given by the expression
\begin{equation}
U(T,\mu ):=U-TS-\mu N\;,  \label{fun13}
\end{equation}%
where $\mu $ is the chemical potential.

In this case, the rule of Nature determines equilibrium via the minimization
of
\begin{equation}
\int ( U - TS - \mu N)\, d \rho,  \label{fun13}
\end{equation}
among probabilities $\rho$.

Physical experiments in the laboratory indicate that in several cases the
probability that the number of particles $N$ is large is very small; the
term $-\mu N$ in the above equation is in consonance with this claim.

\bigskip

\subsection{Grand-canonical systems - indefinite number of particles}

\label{bobo}

Now, we will outline a simplified version (suitable for us) of the main
issues on the topic of the grand-canonical partition as presented in chapter
1.6 in \cite{MR2777415} (see also Section 8 in \cite{Tsch}); here we
consider a certain number of indistinguishable particles $N$ ranging in $%
\mathbb{N}_{0}=\{0,1,2,...,N,...\}$. In dealing with particle densities, it
makes sense to consider the number of particles (per unit volume) as being]
a real non-negative number but we will avoid this issue here (see \cite{LW}
for a more detailed account).

Given $\beta>0$, and a sequence $A_N > 0$, $N \in \mathbb{N}_0$, we denote
\begin{equation*}
Z_N(\beta) := e^{- \beta A_N} \;.
\end{equation*}

So, for $\mu<0$, the \textit{grand-canonical partition sum} associated to
the family $\Phi = (A_N)_{N \in \mathbb{N}_0}$ is
\begin{equation}  \label{averq}
Z(\beta,\mu) := \sum_{N \in \mathbb{N}_0} e^{\beta\, N\,\mu} Z_N(\beta) =
\sum_{N \in \mathbb{N}_0} e^{\beta\, N\,\mu} e^{- \beta A_N} \;.
\end{equation}

Moreover, given $N \in \mathbb{N}_0$, the probability $\mathit{P}_{N,
\beta,\mu}$ of the number of particles to be $N$ at temperature $T = \frac{1%
}{k_B \beta}$, is given by the value
\begin{equation}  \label{aver}
\mathit{P}_{N, \beta, \mu} := \frac{e^{\beta\, N\,\mu} e^{- \beta A_N}}{%
Z(\beta,\mu) } \;.
\end{equation}

Compare \eqref{aver} with \eqref{hu1}.

\smallskip

Besides that, important information is given by the partial derivatives
\begin{equation*}
\frac{\partial }{\partial \beta} \log(Z(\beta,\mu))|_{\beta=\beta_0} =\mu
\,\sum_{N \in \mathbb{N}_0} N \, \frac{e^{\beta_0 \,N\,\mu} e^{- \beta_0 A_N}%
}{ Z(\beta_0,\mu)}- \sum_{N \in \mathbb{N}_0} A_N \,\frac{ e^{\beta_0\,
N\,\mu} e^{- \beta_0 A_N}}{ Z(\beta_0,\mu)} =
\end{equation*}
\begin{equation}  \label{first-derivative}
\mu\,\sum_{N \in \mathbb{N}_0} <N>_{ \mathit{P}_{N, \beta_0, \mu}} - \sum_{N
\in \mathbb{N}_0} <A_N>_{ \mathit{P}_{N, \beta_0, \mu}}.
\end{equation}
and
\begin{equation}  \label{second-derivative}
\frac{\partial}{\partial \mu} \log(Z(\beta,\mu))|_{\mu=\mu_0}= \beta
\,\sum_{N \in \mathbb{N}_0} N \, \frac{e^{\beta\, N\,\mu_0} e^{- \beta A_N}}{%
Z(\beta,\mu_0) } = \beta\,\sum_{N \in \mathbb{N}_0} <N>_{ \mathit{P}_{N,
\beta, \mu_0}}.
\end{equation}

The two above expressions are analogous to the ones appearing in equations
(1.37) and (1.38) in \cite{MR2777415}.
\medskip

Now, we want to trace a parallel with the case of finite particles
(comparing with \eqref{hh1} etc,...)

Given $\beta,A:\{1,2,...,d\} \to \mathbb{R}$, the \textit{microcanonical
partition function (see \eqref{hh1}) is}

\begin{equation}  \label{hh541}
Z (\beta) = \sum_{j=1}^d \, e^{- \beta A(j)}.
\end{equation}

As we mentioned before, the \textit{canonical distribution} $%
\mu^{can}=\mu_{A, \beta}^{can}$ is the probability $\mathfrak{Q}$ such that
\begin{equation}  \label{hh132}
\,\,\,j_0 \to\,\mathfrak{Q}(j_0)=\mu^{can}(j_0)=\frac{ e^{- \beta A(j_0)} }{
\sum_{j=1}^d e^{- \beta A(j)}}= \frac{ e^{- \beta A(j_0)} }{ Z(\beta)}.
\end{equation}

Now we present the dynamical version of \eqref{hh1} and \eqref{hh132}: given
a Lipschitz function $f:\Omega \to \mathbb{R},$ and $x_0 \in \Omega$, denote
\begin{equation*}
\lambda= e^{P (- \beta A)}.
\end{equation*}

It is known (see \cite{MR1085356}) that
\begin{equation*}
\lim_{n \to \infty} \frac{\mathcal{L}_{- \beta A}^n (f) (x_0)}{ \lambda^n}=
\varphi(x_0)\, \int f d \nu,
\end{equation*}
where $\varphi$ is the eigenfunction and $\nu$ the eigenprobability of the
Ruelle operator $\, \mathcal{L}_{- \beta A}$.

Then, the $Z (\beta)$ in expression \eqref{hh1} correspond here to $%
\lambda=e^{P (- \beta A)}.$

In our setting it is natural to denote $Z(\beta):= e^{P(- \beta A) }$.

Then, $P(\beta ):=\log Z(\beta )$ (see also\eqref{main2124}) \thinspace\ $%
\Leftrightarrow $\thinspace\ Topological Pressure \thinspace $P(-\beta A)$.

We believe we made more clear to the reader the relationship between the
corresponding concepts under the two possible settings (more details in
Remark \ref{rel}).

\medskip

In the case of the statistics of a countable number of particles, it is a
different setting: we consider a number of indistinguishable particles $N$
ranging in $\mathbb{N}_0$.

Remember that $\mu<0$ is called the chemical potential. Therefore, it is
natural in our dynamical setting to adapt the reasoning which derived %
\eqref{hh1}.

\smallskip

Given $\beta>0$, $\mu<0$ and a sequence $A_N \geq 0$, $N \in \mathbb{%
N}_0$, the \textit{grand-canonical partition sum} (compare with \eqref{hh541}%
) is
\begin{equation}  \label{ppo}
Z(\beta,\mu) := \sum_{N \in \mathbb{N}_0} e^{\beta\, N\,\mu} e^{- \beta A_N}
\;.
\end{equation}

An example: if each particle has energy $E>0$, then, take $A_N=N\,E.$
\medskip

Moreover, given $N \in \mathbb{N}_0$, the probability $\mathit{P}_{N,
\beta,\mu}$ of the number of particles to be $N$ at temperature $T = \frac{1%
}{k_B \beta}$, is (see \eqref{aver})
\begin{equation}  \label{ave24r}
\mathit{P}_{N, \beta, \mu} := \frac{e^{\beta\, N\,\mu} e^{- \beta A_N}}{%
Z(\beta,\mu)}\,\text{ (compare with\,} \eqref{hh132}\,\mu^{can}(j_0)=\frac{
e^{- \beta A(j_0)} }{ Z(\beta)} ).
\end{equation}

In consonance with \eqref{1sobreNlnBN} and \eqref{hh1} we call
\begin{equation}  \label{ave2123}
P(\beta,\mu)= \log Z(\beta,\mu))
\end{equation}
grand-canonical asymptotic pressure. \medskip

\begin{remark}
\label{rel} Fixing the volume $V$, the chemical potential $\mu $, the
temperature $T$, and the \textit{grand-canonical pressure}, we get that the
grand-canonical \textit{gas pressure} $p$ satisfies (see page 12 in \cite%
{MR2777415}).

\smallskip

\begin{equation}  \label{main2123}
p := \frac{k_B \,T\, P(\beta, \mu)}{V}\,\,\,\,\,\,\text{(compare with}\,\, %
\eqref{fun1} \,\,\,p\, =\frac{ k_B\, T\, N}{ V}).
\end{equation}

\smallskip

If $V$ and $T$ are fixed, then $p$ is linear on $P(\beta ,\mu ),$ or
considering the case of the statistics of just one particle, $\mu =0$, $V=1$
and $T=1$, we get
\begin{equation}
p\sim k_{B}P(\beta ,\mu )\sim \, k_{B}P(\beta ).  \label{main2124}
\end{equation}

In this way, the origin of the terminology topological pressure can be seen
as related in some way to the pressure of a gas.
\end{remark}

Below, we will consider a non-dynamical example:


\begin{example}
\label{Ham-cte}  Consider a chemical potential $\mu < 0$, assume that $E > 0$ represents the energy of a particle, and, under the assumption of non-interaction between particles, we take the Hamiltonian $A_N$  of the form  $A_N= N E$, for
each $N \in \mathbb{N}_0$. In this case, we have 
\begin{equation*}
Z(\beta,\mu) = \sum_{N \in \mathbb{N}_0} e^{\beta\, N\,\mu} e^{- \beta N\,
E}= \sum_{N \in \mathbb{N}_0} e^{N\, \beta\, \,(\mu - E)} = \frac{1}{ 1-
e^{\beta (\mu- E)}} > 0 \;. 
\end{equation*}

Moreover, under the above assumptions, it follows that 
\begin{align*}
\frac{\partial}{\partial \beta} \log(Z(\beta,\mu))|_{\beta=\beta_0} &= \frac{%
1}{Z(\beta_0, \mu)}\sum_{N \in \mathbb{N}} N(\mu - E)e^{N\, \beta_0\, \,(\mu
- E)} \\
&= \frac{(\mu - E)e^{\beta_0(\mu - E)}}{Z(\beta_0, \mu)(1 - e^{\beta_0(\mu -
E)})^2} \;,
\end{align*}
and 
\begin{align*}
\frac{\partial }{\partial \mu} \log(Z(\beta,\mu))|_{\mu=\mu_0} &= \frac{1}{%
Z(\beta, \mu_0)} \sum_{N \in \mathbb{N}} N\, \beta e^{N\, \beta\, \,(\mu_0 -
E)} \\
&= \frac{\beta e^{\beta(\mu_0 - E)}}{Z(\beta, \mu_0)(1 - e^{\beta(\mu_0 -
E)})^2} \;.
\end{align*}

\hspace{12.6cm} $\diamondsuit$
\end{example}
\medskip

On behalf of all authors, the corresponding author states that there is no conflict of interest

\medskip

\medskip

\medskip

A. O. Lopes, IME-UFRGS. Part. supported by CNPq.

E. R. Oliveira, IME-UFRGS

W. de S. Pedra, ICMC-USP. Part. supported by CNPq (309723/2020-5).

V. Vargas, Center for Mathematics - University of Porto. Supported by FCT
Project UIDP/00144/2020.

\medskip


\medskip


\end{document}